\def\ZZ         {{\mathbb Z}}
\def\RR         {{\mathbb R}}
\def\CC         {{\mathbb C}}
\def\FF         {{\mathbb F}}
\def\QQ         {{\mathbb Q}}
\def\PP         {{\mathbb P}}
\def\NN         {{\mathbb N}}
\def\ZZ         {{\mathbb Z}}
\def\A         {{\cal A}}
\def\B         {{\cal B}}
\def\C         {{\cal C}}
\def\D           {{\cal D}}
\def\H         {{\cal H}}
\def\I         {{\cal I}}
\def\O          {{\cal O}}
\def\S           {{\cal S}}
\def\V           {{\cal V}}
\def\rk         {{\rm rk}}
\def\log        {{rm log}}
\def\Char       {{\rm Char}}
\def\Ker        {{\rm {Ker}}}
\def\Spec       {{\rm Spec}}
\def\Supp        {{\rm Supp}}
\def\dim        {{\rm dim}}
\def\log        {{\rm log}}
\def\cal        {\mathcal}
\newtheorem{theorem}{Theorem}[section]
\newtheorem{prop}[theorem]{Proposition}
\newtheorem{corollary}[theorem]{Corollary}
\newtheorem{dfn}[theorem]{Definition}
\newtheorem{cond}[theorem]{Condition}
\theoremstyle{remark}
\newtheorem{remark}[theorem]{Remark}
\newtheorem{claim}[theorem]{Claim}
\newtheorem{example}[theorem]{Example}
\title{Albanese varieties of abelian covers}
\author{Anatoly Libgober}
\dedicatory{To the memory of Shreeram Abhyankar.}
\thanks{Author supported by a grant from Simons Foundation}
\begin{document}
\begin{abstract} We show that the Albanese variety 
of an abelian cover of the projective plane is isogenous 
to a product of isogeny components of abelian varieties
associated with singularities of the ramification locus
provided certain conditions are met.
In particular Albanese varieties 
of abelian covers of $\PP^2$ ramified over arrangements 
of lines and uniformized by the unit ball in $\CC^2$
are isogenous to a product of Jacobians of Fermat curves.
Periodicity of the sequence of (semi-abelian)   
Albanese varieties of unramified cyclic covers of  
complements to a plane singular curve is shown.
\end{abstract}

\maketitle

\section{Introduction}

Albanese varieties of cyclic branched 
covers of $\PP^2$ ramified over singular curves 
are rather special.
If singularities of the ramification locus 
are no worse than ordinary nodes and cusps then 
(cf. \cite{jose})  
the Albanese variety of a cyclic cover is 
isogenous to a product of elliptic curves $E_0$ with $j$-invariant zero.
More generally, in \cite{annalen} it was shown that the Albanese variety  
of a cyclic cover with ramification locus having {\it arbitrary} singularities
is isogenous to 
a product of isogeny components of local Albanese varieties i.e.
the abelian varieties 
canonically associated with the local singularities of the ramification locus. 
In particular, Albanese varieties of cyclic covers 
are isogenous to a product of Jacobians of curves.

In this paper we shall 
describe Albanese varieties of {\it abelian} covers of $\PP^2$.
The main result is that the class of abelian varieties 
which are Albanese varieties of ramified abelian covers 
(with possible non reduced ramification locus)
is also built from the isogeny components of local  Albanese varieties, 
provided some conditions on fundamental group of the complement 
to ramification locus are met (cf. \ref{condition}).
Also, 
in abelian case one needs to allow local Albanese varieties of non reduced 
singularities having the same reduced structure as the 
germs of the singularities 
of ramification locus of the abelian cover.

One of the steps in our proof of this result involves a 
description of Jacobians 
of abelian covers of projective line having an independent interest.
 In this case we show that
 all isogeny components of Jacobians of abelian covers 
of $\PP^1$ with arbitrary ramification 
are 
components of Jacobians of explicitly described 
cyclic covers. If the abelian 
cover is ramified only at three points and has the  
Galois group isomorphic to $\ZZ_n^2$ then it  
is biholomorphic to Fermat curve $x^n+y^n=z^n$.
In this case, such results are
going  back to works of Gross, Rohrlich and Coleman
(cf. \cite{gross},\cite{coleman})
where isogeny components of  Jacobians of Fermat curves were studied.

The proof of isogeny decomposition 
of abelian covers is constructive and,
 as an application, we obtain the isogeny classes of Albanese 
varieties of the abelian covers of $\PP^2$, 
discovered by Hirzebruch (cf.\cite{hirz}),
having the unit ball as the universal cover. 
These Albanese varieties 
are isogenous to products of Jacobians of Fermat curves
described explicitly.
Another
interesting abelian cover of $\PP^2$ ramified over an arrangement of lines 
is the Fano surface of lines on the Fermat cubic threefold.
The Albanese variety of this Fano surface (according 
to \cite{clemens}, this abelian variety is also 
the intermediate Jacobian of the Fermat cubic 
threefold) is isogenous to the product of five copies of $E_0$.
This result was recently independently obtained in \cite{roulleau} and 
\cite{toledo} (in \cite{roulleau} the {\it isomorphism} class of Albanese 
variety of Fano surfaces was found).

Another application considers the behavior of the Albanese varieties 
in the towers of cyclic and abelian covers. It is known 
for some time that Betti and Hodge numbers of cyclic (resp. abelian)
covers are periodic (resp. polynomially periodic cf. \cite{eko}).
It turns out that the sequence of isogeny classes of 
Albanese varieties of 
cyclic covers with given ramification locus is periodic
but periodicity fails in abelian towers. Moreover, 
we show similar periodicity for sequence of semi-abelian 
varieties which are Albanese varieties of {\it quasi-projective 
surfaces} which are unramified covers of $\PP^2\setminus \C$.

The content of the paper is the following. In section \ref{prelim}
 we recall several key definitions and results 
used later, in particular, the characteristic varieties,
Albanese varieties in quasi-projective and local cases.
Section \ref{linejacobians} considers Jacobians of 
abelian covers of $\PP^1$, and the main result is 
that isogeny components of such Jacobians 
are all the isogeny components of Jacobians of cyclic covers of $\PP^1$.
This section also contains calculation 
of multiplicities of characters of representation of the covering 
group on the space of holomorphic 1-forms. In the case of 
cyclic covers, such multiplicities were calculated in 
\cite{Arch}. The main result of the paper, showing that
Albanese varieties of abelian covers are isogenous 
to a product of isogeny components of local Albanese varieties
of singularities, is proven in section  \ref{main}.
The case of covers ramified over arrangements of lines 
is considered in section \ref{examplessection}. This includes, 
the already mentioned case of Fano surface (of lines) on the Fermat cubic 
threefold.
The last section 
contains applications to calculation 
of Mordell-Weil ranks of isotrivial abelian varieties and   
periodicity properties of Albanese varieties in towers
of abelian covers.
Note that the prime field of all varieties,
maps between them and function fields considered in this 
paper is $\CC$.

I want to thank anonymous referee for careful reading 
of this paper and many useful suggestions including
usage of LaTex.

\section{Preliminaries}\label{prelim}

\subsection{Characteristic varieties}

We recall the construction of invariants of the fundamental 
group of the complement playing the key role in description 
of the Albanese varieties of abelian covers. We follow \cite{charvar}
(cf. also \cite{depth}).

Let $X$ be  a quasi-projective smooth manifold such that 
$H_1(X,\ZZ) \ne 0$. The exact
sequence
\begin{equation}\label{keyextension} 
      0 \rightarrow \pi_1(X)'/\pi_1(X)'' 
\rightarrow \pi_1(X)/\pi_1(X)'' \rightarrow \pi_1(X)/\pi_1(X)' 
\rightarrow 0
\end{equation}
(where $G'$ denotes the commutator subgroup of a group $G$) 
can be used to define the action of $H_1(X,\ZZ)=
\pi_1(X)/\pi_1(X)'$ on the left term in (\ref{keyextension}).
This action allows to view $C(X)=\pi_1(X)'/\pi_1(X)'' \otimes \CC$ as
a $\CC[H_1(X,\ZZ)]$-module. Recall that the support of 
a module $M$ over a commutative 
noetherian ring $R$ is the sub-variety $\Supp(M) \subset \Spec(R)$ consisting 
of the prime ideals $\wp$ for which the localization $M_{\wp} \ne 0$. 
\begin{dfn}\label{charvar} The characteristic variety $V_i(X)$ 
is ({\it the  reduced}) sub-variety of $\Spec \CC[H_1(X)]$ 
which is the support $\Supp(\Lambda^i(C(X)))$  of the $i$-th exterior 
power of the module $C(X)$. 
The depth of $\chi \in \Spec \CC[H_1(X)]$ is an integer given by  
\begin{equation}
d(\chi)=\{ {\rm max}\  i 
\vert \chi \in V_i(X)\}
\end{equation}
\end{dfn}
Using the canonical identification of $\Spec\CC[H_1(X,\ZZ)]$ and 
the torus of characters $\Char(\pi_1(X))$
 one can interpret points of characteristic varieties
as rank one local systems on $X$. This interpretation  
leads to the following alternative description of $V_i(X)$
(cf. \cite{ekogrenoble}, 
\cite{charvar}))  
\begin{equation}
  V_i(X) \setminus {1}=\{\chi \in \Char\pi_1(X) \vert , \chi \ne 1,
dim H^1(X,\chi) \ge i\}
\end{equation}

It follows from \cite{arapura} that if a smooth projective 
closure  $\bar X$ of $X$ satisfies\footnote{this condition 
is independent of a choice of smooth compactification  $\bar X$}
$H_1(\bar X,\QQ)=0$
then each $V_i(X)$ is a finite union 
of translated subgroups of the affine torus $\Char(\pi_1(X))$ 
i.e. a finite union of subset 
of the form $\psi \cdot H$ where $H$ is a subgroup 
of $\Char(\pi_1(X))$ and $\psi$ is a character of $\pi_1(X)$.
Moreover, such a character $\psi$ can be chosen 
to have a finite order (cf. \cite{nonvan}).  
It also follows from \cite{arapura}
that each irreducible 
component  $\V$ of characteristic variety having a dimension greater than one
determines a holomorphic map:
$\nu: X \rightarrow P$ where $P$ is a hyperbolic curve (i.e. a curve
with negative euler characteristic).

In the case when $X=\PP^2\setminus \C$, where $\C$ is a plane curve
with arbitrary singularities, 
$P$ is biholomorphic to $\PP^1\setminus D$ 
where $D$ is a finite set.

Returning to the case when $X$ is smooth quasi-projective, 
a component corresponding to a map $\nu: X \rightarrow P$ 
consists of the characters $\nu^*(\chi)$ where $\chi \in \Char(\pi_1(P))$ 
(here, for a map $\phi: X \rightarrow Y$ between topological spaces $X,Y$, 
we denote by $\phi^*$ the induced map 
$\Char (H_1(Y,\ZZ))=H^1(Y,\CC^*) \rightarrow H^1(X,\CC^*)=\Char (H_1(X,\ZZ))$).
The map $\nu$ also induces homomorphisms  $h^i(\nu*): H^i(P,\chi) \rightarrow 
H^i(X,\nu^*(\chi))$ and
$h_i(\nu*): H_i(P,\chi) \rightarrow 
H_i(X,\nu^*(\chi))$.
The maps $h^1(\nu^*)$ and $h_1(\nu^*)$ are isomorphisms 
for all but finitely many $\chi \in \Char(\pi_1(P))$
(cf.\cite[Proof of Prop.1.7]{arapura}).

At the intersection of components the depth of characters 
is bigger then the depth of generic character in either 
of the components i.e. the depth is jumping. More precisely, 
if $\chi \in V_k(X) \cap V_l(X)$
where both $V_k(X)$ and $V_l(X)$ have positive dimensions then 
the depth of $\chi$ is at least $k+l$ (cf. \cite{matei}).
More precisely we shall use the following  
assumption on the characteristic variety 
at the points belonging to several components. 
In particular it includes an inequality on depth in the {\it the opposite} 
direction:

\begin{cond}\label{condition} 
(1) Let $\chi \in \V_1\cap...\cap \V_s$ 
and $\chi=\nu^*_i(\chi_i)$ for $\chi_i \in \Char(P_{i})$ where 
$\nu_i: X \rightarrow P_i$ is the map corresponding to 
the component $\V_i$. Then:
\begin{equation}
  \bigoplus_i h_1(\nu_i): H_1(X,\chi) \rightarrow \bigoplus H_1(P_i,\chi_i)
\end{equation} 
is injective. In particular, 
the depth of each character $\chi$ in the intersection 
of several positive dimensional irreducible components $\V_1,...,\V_s$ 
of the characteristic variety
does not exceed the sum of the depths of 
the generic character in each component $\V_i$.

(2) If $\chi \in \V_i$ but $\chi \notin \V_i\cap \V_j, j \ne i$
then $h_1(\nu_i): H_1(X,\chi) \rightarrow H_1(P_i,\chi_i)$
is an isomorphism.

\end{cond}

This condition is satisfied in the examples considered in section 
\ref{examplessection}.

\subsection{Abelian covers.}

Given a surjection $\pi_{\Gamma}: \pi_1(X) \rightarrow 
\Gamma$ onto a finite group, there are a unique quasi-projective manifold 
$\widetilde{X}_{\Gamma}$ and a map $\tilde \pi_{\Gamma}: 
\widetilde X_{\Gamma} \rightarrow X$
which is an unramified 
cover with covering group $\Gamma$. The variety $\widetilde X_{\Gamma}$ 
is characterized by the property that $\Gamma$ acts freely on 
$ \widetilde X_{\Gamma}$ and $\widetilde X_{\Gamma}/\Gamma=X$. 
Let $\bar X_{\Gamma}$ denote a smooth model of a compactification of 
$\widetilde{X}_{\Gamma}$ such that $\tilde \pi_{\Gamma}$ 
extends to a regular map $\bar \pi_{\Gamma}: 
\bar X_{\Gamma} \rightarrow \bar X$ ($\bar X$ as above). 
The fundamental group $X_{\Gamma}$, being 
birational invariant, depends only on $X$ and $\pi_{\Gamma}$.

Let $\C=\bar X\setminus X$ be the ``divisor at infinity'' 
and let $\tilde \C \subset \C$ be a divisor on $\bar X$ whose irreducible 
components are components of $\C$. If $\chi \in \Char(\pi_1(X))$ is 
trivial on the components of $\C$ not in $\tilde \C$ then 
$\chi$ is the pullback of a character of $\pi_1(\bar X \setminus 
\tilde \C)$ via the inclusion $X \rightarrow \bar X \setminus \tilde \C$.
We shall denote the corresponding character of 
$\pi_1(\bar X\setminus \C)$ as  $\chi$ as well but 
(since the depth of $\chi$ depends on the underlying space)
corresponding depths will be denoted $d(\chi,\C)$ and 
$d(\chi, \tilde \C)$ respectively.

The homology groups of unramified and ramified covers can be found in terms 
of characteristic varieties as follows (cf. \cite{charvar}).

\begin{theorem}\label{homologycovers}
 1.(cf. \cite{charvar}) With above notations:
 \begin{equation}
\rk H_1(\widetilde X_{\Gamma},\QQ)=\sum_{\chi \in Char \Gamma} d(\pi_{\Gamma}^*(\chi),
\C)
\end{equation}

2.(cf. \cite{sakuma}) 
Let $I(\chi)$ be the collection of components of $\C$ such 
that $\chi(\gamma_{C_i})\ne 1$ ($\gamma_{C_i}$ is a meridian of 
the component $C_i$) and let $\C_{\chi}=\bigcup_{i \in I(\chi )} C_i$.  
Then  
\begin{equation}\label{sakuma}
\rk H_1(\bar X_{\Gamma},\QQ)=\sum_{\chi \in Char \Gamma} d(\pi_{\Gamma}^*(\chi),
\C_{\pi^*_{\Gamma}(\chi}))
\end{equation}
\end{theorem}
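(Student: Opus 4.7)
The plan is to derive both parts from the decomposition of the cohomology of a finite Galois cover into $\Gamma$-isotypic pieces, combined with the twisted-cohomology interpretation of the characteristic varieties recalled in Definition \ref{charvar}.

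For part 1, since $\tilde\pi_\Gamma\colon\widetilde X_\Gamma\to X$ is a finite \'etale Galois cover, the pushforward decomposes as a direct sum of rank-one local systems indexed by characters of $\Gamma$,
\[
\tilde\pi_{\Gamma,*}\CC_{\widetilde X_\Gamma}\;=\;\bigoplus_{\chi\in\Char\Gamma}\L_{\pi_\Gamma^*(\chi)},
\]
where $\L_\psi$ denotes the local system on $X$ with monodromy $\psi$. The Leray spectral sequence (trivial for a finite cover) then produces the eigenspace decomposition
\[
H^1(\widetilde X_\Gamma,\CC)\;=\;\bigoplus_{\chi\in\Char\Gamma}H^1(X,\L_{\pi_\Gamma^*(\chi)}).
\]
For each $\chi\neq 1$, the alternative description $V_i(X)\setminus\{1\}=\{\psi:\dim H^1(X,\psi)\geq i\}$ yields $\dim H^1(X,\L_{\pi_\Gamma^*(\chi)})=d(\pi_\Gamma^*(\chi),\C)$, while the trivial character contribution $H^1(X,\CC)$ is absorbed by the standard convention for $d(1,\C)$. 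Summing gives the first formula.

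For part 2 one would like to argue analogously, but the regular extension $\bar\pi_\Gamma\colon\bar X_\Gamma\to\bar X$ fails to be \'etale along components $C_i\subset\C$ where the monodromy $\pi_\Gamma^*(\chi)(\gamma_{C_i})$ is non-trivial. The key observation is that for any component $C_j\notin\C_{\pi_\Gamma^*(\chi)}$ the character $\chi$ has trivial monodromy on a meridian of $C_j$, so $\pi_\Gamma^*(\chi)$ extends to a local system on the larger open $\bar X\setminus\C_{\pi_\Gamma^*(\chi)}$, and near such a $C_j$ the cover looks \'etale from the perspective of its $\chi$-eigenspace. Applying the part-1 argument to the restriction of $\bar\pi_\Gamma$ over $\bar X\setminus\C_{\pi_\Gamma^*(\chi)}$ therefore gives, eigenspace by eigenspace,
\[
H^1(\bar X_\Gamma,\CC)_\chi\;\cong\;H^1\bigl(\bar X\setminus\C_{\pi_\Gamma^*(\chi)},\,\L_{\pi_\Gamma^*(\chi)}\bigr),
\]
and translating this through the depth $d(\pi_\Gamma^*(\chi),\C_{\pi_\Gamma^*(\chi)})$ on the new underlying space yields (\ref{sakuma}).

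The main obstacle is the eigenspace identification in part 2, namely verifying that $H^1(\bar X_\Gamma,\CC)_\chi$ is indeed computed on $\bar X\setminus\C_{\pi_\Gamma^*(\chi)}$ and not on the full compactification. This requires a local analysis at each component $C_j\notin\C_{\pi_\Gamma^*(\chi)}$: the Mayer--Vietoris contribution of a tubular neighborhood of the preimage of $C_j$ in $\bar X_\Gamma$ has no $\chi$-isotypic $H^1$ because $\bar\pi_\Gamma$ is \'etale there on the $\chi$-summand, so those components can be freely added to the base. Combining the resulting local vanishings with the long exact sequence of the pair $(\bar X_\Gamma,\widetilde X_\Gamma)$, and projecting to the $\chi$-isotypic component, identifies the $\chi$-eigenspace with twisted cohomology of the intermediate partial compactification, completing the proof.
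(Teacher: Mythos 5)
The paper itself does not prove Theorem \ref{homologycovers}; it quotes it from \cite{charvar} (part 1) and \cite{sakuma} (part 2), so your argument can only be measured against the standard proofs. Your part 1 is essentially that standard proof: the isotypic decomposition of the pushforward for the unramified cover plus the identification $\dim H^1(X,\L_{\pi_\Gamma^*\chi})=d(\pi_\Gamma^*\chi,\C)$ for $\pi_\Gamma^*\chi\ne 1$. The one loose end is the trivial character: with the depth defined in this paper via $\Supp\,\Lambda^i(\pi_1'/\pi_1''\otimes\CC)$ it is not automatic that $d(1,\C)$ accounts for $b_1(X)$ (in \cite{charvar} the characteristic varieties are built from the relative Alexander module, which is what makes the $\chi=1$ term come out right), so ``absorbed by the standard convention'' is an appeal to a convention, not an argument.

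The genuine gap is in part 2, precisely at the step you yourself flag. You propose to ``apply the part-1 argument to the restriction of $\bar\pi_\Gamma$ over $\bar X\setminus\C_{\pi_\Gamma^*(\chi)}$'', but that restriction is not \'etale: it is still branched along every component $C_j$ with $\pi_\Gamma(\gamma_{C_j})\ne 1$ on which the particular character $\chi$ takes the value $1$, so the pushforward there does not split into rank-one local systems and part 1 does not apply. The local statement meant to repair this --- that a tubular neighbourhood of the preimage of such a $C_j$ has no $\chi$-isotypic $H^1$ because the cover is ``\'etale there on the $\chi$-summand'' --- is false as stated: the preimage of a punctured neighbourhood of $C_j$ does carry $\chi$-isotypic classes (the lifted meridians), and these are exactly the classes that must be killed to pass from $H^1(X,\L_{\pi_\Gamma^*\chi})$ to $H^1(\bar X\setminus\C_{\pi_\Gamma^*\chi},\L_{\pi_\Gamma^*\chi})$. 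The correct (Sakuma-type) argument is $\Gamma$-equivariant and runs through the meridian/Gysin analysis: $H_1(\bar X_\Gamma,\QQ)$ is the quotient of $H_1(\widetilde X_\Gamma,\QQ)$ by the span of the meridian classes of the components of the boundary divisor; for a component lying over $C_i$ with $\chi(\pi_\Gamma(\gamma_{C_i}))\ne 1$ the stabilizer of that component contains $\pi_\Gamma(\gamma_{C_i})$, so the $\chi$-projection of its meridian class vanishes, while for components over $C_j$ with $\chi(\pi_\Gamma(\gamma_{C_j}))=1$ quotienting by their meridian classes realizes, on the $\chi$-eigenspace, the Thom--Gysin extension of the local system $\L_{\pi_\Gamma^*\chi}$ across $C_j$. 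Only after both points are established does one get $H^1(\bar X_\Gamma,\CC)_\chi\cong H^1(\bar X\setminus\C_{\pi_\Gamma^*\chi},\L_{\pi_\Gamma^*\chi})$ and hence (\ref{sakuma}); a word is also needed on why the eigenspace decomposition can be taken on a smooth model (choose a $\Gamma$-equivariant resolution, or use birational invariance of $H^1$ with $\QQ$-coefficients).
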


The following special case of Theorem \ref{homologycovers} 
will be used in section \ref{linejacobians}.

\begin{corollary}\label{restrictedcharacters}
 Let 
$$\pi_{\Gamma(a_{i_1},....,a_{i_l})}: \pi_1(\PP^1\setminus \{a_{i_1},...,a_{i_l}\})
\rightarrow H_1(\PP^1\setminus \{a_{i_1},...,a_{i_l}\},\ZZ/n\ZZ), 
0 \le i_1,....i_l, \le k$$ 
be the composition of Hurewicz map with the reduction modulo $n$ and 
let $X_n(a_{i_1},....a_{i_l})$ 
be the corresponding ramified abelian cover\footnote{note that this is the universal cover for the covers having an 
abelian $n$-group as the covering group} 
of $\PP^1$ 
with the covering group 
$\Gamma=H_1(\PP^1\setminus \{a_{i_1},...,a_{i_l}\},\ZZ/n\ZZ)$.
Then 
\begin{equation}
H^1(X_n(a_0,...,a_k),\CC)_{\chi}=
\oplus H^1(X_n(a_{i_1},...,a_{i_l},\CC)_{\chi^{r}(a_{i_1,...,i_l})} \ \ \ 3 \le l \le k, 0 \le i_j \le k
\end{equation} 
where the summation is over the 
characters ${\chi^{r}(a_{i_1,...,i_l})}$ which are restricted in the sense 
that they do not take value 1 on a cycle which is 
the boundary of a small disk about 
any point $a_{i_1},...,a_{i_l}$.
\end{corollary}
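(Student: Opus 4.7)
The strategy is to decompose the cohomology of the top abelian cover $X_n(a_0,...,a_k)$ according to characters of its Galois group $\Gamma_0=H_1(\PP^1\setminus\{a_0,...,a_k\},\ZZ/n\ZZ)$ and to match each eigenspace with one on a cover branched over a smaller subset. For $\chi\in\Char(\Gamma_0)$, let $I(\chi)\subseteq\{a_0,...,a_k\}$ be the set of points whose meridian $\gamma_j$ satisfies $\chi(\gamma_j)\ne 1$. Since $\chi$ is trivial on every meridian outside $I(\chi)$, it factors uniquely as
\[
  \chi\colon\Gamma_0\twoheadrightarrow\Gamma_{I(\chi)}:=H_1(\PP^1\setminus I(\chi),\ZZ/n\ZZ)\xrightarrow{\,\chi^r\,}\CC^*,
\]
and by construction $\chi^r$ is restricted in the sense of the corollary.

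The key step is to produce a common intermediate cyclic quotient. Set $H=\ker\chi\subseteq\Gamma_0$ and $H'=\ker\chi^r\subseteq\Gamma_{I(\chi)}$. Both quotients $X_n(a_0,...,a_k)/H$ and $X_n(I(\chi))/H'$ are cyclic branched covers of $\PP^1$ with isomorphic cyclic Galois groups $\Gamma_0/H\simeq\Gamma_{I(\chi)}/H'$ and identical monodromy along each meridian of $\PP^1\setminus I(\chi)$, namely the common value of $\chi$. For $a_j\notin I(\chi)$ the inertia group $\langle\gamma_j\rangle$ of $a_j$ in $\Gamma_0$ is contained in $H$, so $X_n(a_0,...,a_k)/H$ is unramified at these points. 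Since a cyclic cover of $\PP^1$ is determined by its Galois group and monodromy data, both quotients coincide with a single cyclic cover $Z_\chi\to\PP^1$ branched precisely over $I(\chi)$. Because $H$ acts trivially on the $\chi$-eigenspace, the standard identity $H^*(Y,\CC)^G=H^*(Y/G,\CC)$ for finite group actions yields
\[
   H^1(X_n(a_0,...,a_k),\CC)_\chi=H^1(Z_\chi,\CC)_{\bar\chi}=H^1(X_n(I(\chi)),\CC)_{\chi^r},
\]
where $\bar\chi$ is the faithful character of $\Gamma_0/H$ induced by $\chi$.

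When $|I(\chi)|\leq 2$ the cover $Z_\chi$ is a cyclic branched cover of $\PP^1$ ramified at most twice, hence rational, so both eigenspaces vanish; only subsets with $l:=|I(\chi)|\geq 3$ contribute. Summing the above equality over $\chi\in\Char\Gamma_0$ and grouping characters by $I(\chi)$ yields the stated decomposition, since for each subset $\{a_{i_1},...,a_{i_l}\}$ the restricted characters of $H_1(\PP^1\setminus\{a_{i_1},...,a_{i_l}\},\ZZ/n\ZZ)$ are exactly the characters $\chi^r$ arising from $\chi\in\Char\Gamma_0$ with $I(\chi)=\{a_{i_1},...,a_{i_l}\}$. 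The main technical point on which everything hinges is the geometric identification $X_n(a_0,...,a_k)/H\cong X_n(I(\chi))/H'\cong Z_\chi$; once the inclusion $\langle\gamma_j\rangle\subseteq H$ for $a_j\notin I(\chi)$ is in place, the rest reduces to character-by-character descent, which is routine.
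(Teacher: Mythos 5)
Your argument is correct, but it proceeds along a different route than the paper: the paper obtains the corollary as a direct specialization of Theorem \ref{homologycovers}(2) (the Sakuma-type formula expressing $H_1$ of a branched abelian cover through the depths $d(\pi_\Gamma^*(\chi),\C_{\chi})$ computed on the curve $\C_\chi$ where $\chi$ is nontrivial), so no separate geometric construction is made there. You instead give a self-contained descent argument: for each character $\chi$ of $\Gamma_0=H_1(\PP^1\setminus\{a_0,\dots,a_k\},\ZZ/n\ZZ)$ you pass to the quotient by $\ker\chi$, identify it (using triviality of $\chi$ on the meridians outside $I(\chi)$, hence unramifiedness there) with the cyclic cover branched exactly over $I(\chi)$, identify the same curve as a quotient of $X_n(I(\chi))$ by $\ker\chi^r$, and then use $H^1(Y,\CC)^G=H^1(Y/G,\CC)$ equivariantly to match the $\chi$- and $\chi^r$-eigenspaces; grouping characters by their support $I(\chi)$ and discarding $|I(\chi)|\le 2$ (rational quotients) gives the decomposition. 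The two approaches encode the same idea — each eigenspace only "sees" the points where the character is nontrivial — but the paper's citation-based derivation is immediate given the stated machinery, while yours makes the mechanism explicit at the level of curves, exhibits the common cyclic quotient $Z_\chi$ concretely (which is in the spirit of how these quotients are used later in the proof of Theorem \ref{abcyccovers}), and explains transparently why only subsets with $l\ge 3$ contribute. One small remark: your bookkeeping naturally includes the case $I(\chi)=\{a_0,\dots,a_k\}$ (where the identification is trivial), which is the natural reading of the statement even though the displayed range is written as $3\le l\le k$; this is a notational issue in the statement, not a defect of your proof.
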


\subsection{Albanese varieties of quasi-projective 
manifolds}\label{semiabelian}

Let $X$ be a smooth quasi-projective manifold and
let $\bar X$ be a smooth compactification of $X$.
Denote $\bar X \setminus X$  by $\C$ and assume in this section
that $\C$ is  a divisor with normal crossings.
One associates to $X$ a semi-abelian 
variety i.e. an extension:
\begin{equation}\label{defalb}
0 \rightarrow T \rightarrow Alb(X) \rightarrow A \rightarrow 0
\end{equation}
where 
$T$ is a torus and $A$ is an abelian variety (the abelian part of $Alb(X)$)
called the {\it Albanese variety} of $X$.  
Such a semi-abelian variety can be obtained as
$$H^0(\bar X,\Omega^1(\log(\C))^*/H_1(X,\ZZ)$$ where embedding 
$H_1(X,\ZZ) \rightarrow H^0(\bar X,\Omega^1(\log(\C))^*$
is given by 
$\gamma \in H_1(X,\ZZ) \rightarrow (\omega \rightarrow \int_{\gamma}\omega)$
(and polarization of abelian part is 
coming from the Hodge form on $H_1(\bar X,\ZZ)$ given by  
$(\gamma_1, \gamma_2)=\int_{\bar X} \gamma_1^* \wedge \gamma_2^* \wedge 
h^{\dim X-1}$ where $h \in H^2(\bar X,\ZZ)$ is the class of hyperplane section).

One can also view 
$AlbX$ as the semi-abelian part of the 1-motif associated to the 
(level one) mixed Hodge structure
on $H_1(X,\ZZ)$ (cf. \cite{deligne}, section 10.1). 
The abelian part of $Alb(X)$ is the Albanese variety
of a smooth projective compactification of $X$. It clearly 
is independent of a choice 
of the latter.

In this paper we shall consider 
Albanese varieties of abelian covers of quasi-projective
surfaces but note that the 
 Albanese variety of an abelian covers of quasi-projective
manifold of any dimension can be obtained as the 
Albanese variety of the corresponding abelian cover of a surface 
due to the following Lefschetz type result:

\begin{prop} Let $X$ be a quasi-projective manifold and 
$H \cap X$ a generic 2-dimension section by a linear space $H$.
Then $\pi_1(X)=\pi_1(X \cap H)$. 

Let $\Gamma$ be a finite quotient of these groups.
Then the unramified $\Gamma$-covers 
$\tilde X_{\Gamma}$ and $\widetilde {(X \cap H)_{\Gamma}}$,
corresponding to surjections of  $\pi_1(X)$ and $\pi_1(X \cap H)$
onto $\Gamma$,  
have Albanese varieties which are isomorphic 
as semi-abelian varieties.
\end{prop}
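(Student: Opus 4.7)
The plan is to establish the fundamental group statement first, then bootstrap it to an isomorphism of semi-abelian Albanese varieties via a $5$-lemma argument together with functoriality of mixed Hodge structures. The first assertion $\pi_1(X) = \pi_1(X \cap H)$ is the Zariski--Lefschetz theorem for smooth quasi-projective varieties, iterated from $\dim X$ down to $2$ to handle the codimension-$(\dim X - 2)$ linear section $H \cap X$. The quasi-projective version (Hamm--L\^e, Goresky--MacPherson) gives the isomorphism for sufficiently generic $H$.

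For the Albanese assertion, I would first observe that the surjection $\pi_1(X \cap H) \twoheadrightarrow \Gamma$ factors through $\pi_1(X \cap H) \xrightarrow{\sim} \pi_1(X) \twoheadrightarrow \Gamma$, so the cover $\widetilde{(X \cap H)_\Gamma}$ is canonically identified with the fibre product $\tilde X_\Gamma \times_X (X \cap H)$. In particular it sits as a closed submanifold of $\tilde X_\Gamma$ via a canonical embedding $j$ lifting $X \cap H \hookrightarrow X$. Next I would compare the fundamental groups of the covers using the $5$-lemma applied to
\begin{equation*}
1 \to \pi_1(\widetilde{(X \cap H)_\Gamma}) \to \pi_1(X \cap H) \to \Gamma \to 1,
\end{equation*}
\begin{equation*}
1 \to \pi_1(\tilde X_\Gamma) \to \pi_1(X) \to \Gamma \to 1.
\end{equation*}
The embedding $j$ induces a morphism of these extensions that is the identity on $\Gamma$ and the Lefschetz isomorphism in the middle, hence is an iso on kernels. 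By Hurewicz this upgrades to an iso $H_1(\widetilde{(X \cap H)_\Gamma}, \ZZ) \cong H_1(\tilde X_\Gamma, \ZZ)$.

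To lift this to an isomorphism of semi-abelian Albaneses I would invoke functoriality of $Alb$: after choosing compatible smooth compactifications of $\tilde X_\Gamma$ and $\widetilde{(X \cap H)_\Gamma}$ with snc boundary, restriction of logarithmic $1$-forms gives a morphism $Alb(j): Alb(\widetilde{(X \cap H)_\Gamma}) \to Alb(\tilde X_\Gamma)$ of semi-abelian varieties. This morphism realises the $H_1$-iso of the previous step and is simultaneously a morphism of the level-one mixed Hodge structures on $H_1$ defining the two Albanese varieties. Since morphisms of MHS are strict, any MHS morphism bijective on the underlying abelian group is automatically a MHS isomorphism, so $Alb(j)$ is an isomorphism of semi-abelian varieties.

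The main obstacle I anticipate is that one cannot apply Lefschetz directly to $\tilde X_\Gamma$: the preimage of a hyperplane section of $X$ under the finite \'etale map $\tilde X_\Gamma \to X$ need not be a hyperplane section of any projective embedding of $\tilde X_\Gamma$, so the Hamm--L\^e theorem is not immediately available there. Routing through the $5$-lemma on $\pi_1$ and then transporting the MHS via functoriality of $Alb$ is exactly what sidesteps this. A subsidiary technical point is choosing the compactifications compatibly so that $j$ extends and the restriction map on $\Omega^1(\log)$ is defined; this is a standard resolution argument.
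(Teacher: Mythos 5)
The paper states this proposition without giving any proof, so there is nothing to compare line by line; judged on its own, your argument is correct and is the natural one given the paper's framework. The quasi-projective Zariski--Lefschetz theorem (Hamm--L\^e, Goresky--MacPherson), iterated for a generic pencil of hyperplanes, gives $\pi_1(X\cap H)\cong\pi_1(X)$; the identification of $\widetilde{(X\cap H)_\Gamma}$ with $\tilde X_\Gamma\times_X(X\cap H)$ is correct (and connected, since the composite map to $\Gamma$ is surjective); the comparison of the two kernel groups is immediate (you do not even need the $5$-lemma: the Lefschetz isomorphism carries $\Ker(\pi_1(X\cap H)\to\Gamma)$ onto $\Ker(\pi_1(X)\to\Gamma)$), and abelianization then gives the isomorphism on $H_1(\cdot,\ZZ)$ induced by $j$. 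The passage to the semi-abelian Albanese is exactly in the spirit of the paper, which defines $Alb$ as the semi-abelian part of Deligne's 1-motif of the MHS on $H_1$: $j$ induces a morphism of mixed Hodge structures which is bijective on the underlying groups, hence (by strictness, i.e. because the category of MHS is abelian) an isomorphism of MHS, and by functoriality and full faithfulness of the 1-motif/semi-abelian correspondence $Alb(j)$ is an isomorphism of semi-abelian varieties. Two small remarks: your worry about applying Lefschetz directly upstairs can alternatively be resolved by the Goresky--MacPherson Lefschetz theorem for quasi-finite maps to $\PP^N$ applied to the composite $\tilde X_\Gamma\to X\subset\PP^N$, which yields $\pi_1$ of the preimage of a generic section directly; and the statement $\pi_1(X)=\pi_1(X\cap H)$ needs $\dim X\ge 3$ for content (it is vacuous for $\dim X=2$), with genericity meaning transversality to a Whitney stratification of a compactification of $X$, which holds for a generic linear $H$.
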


\subsection{Local Albanese varieties of plane curve singularities} For details of the material
of this section we refer to \cite{annalen}.
Let $f(x,y)$ be an analytic 
germ of a reduced isolated curve singularity in $\CC^2$.
One associates with it the Milnor fiber $M_f=B \cap f^{-1}(t)$
where $B$ is a small ball in $\CC^2$ centered at the singular point.
The latter supports canonical level one 
limit Mixed Hodge structure on $H^1(M_f,\ZZ)$
(cf. \cite{vancoh}). Again 
one can apply Deligne's construction \cite[10.3.1]{deligne} 
which leads to the following.

\begin{dfn} The local Albanese variety of a germ $f$ is 
the abelian part of the 1-motif of the limit 
Mixed Hodge structure on $H^1(M_f,\ZZ)$. Equivalently, 
this is quotient of $F^0Gr_{-1}^WH_1(M_f\CC)/Im H_1(M_f,\ZZ)$
where $F$ and $W$ are respectively the Hodge and weight filtrations.
The canonical polarization is coming from the form 
induced by the intersection form of $H_1(M_f,\ZZ)$ on 
$Gr^W_{-1}H_1(M_f,\ZZ)$.
\end{dfn}

The local Albanese has a description in terms of the Mixed Hodge 
structure on the cohomology of the link of the surface singularity 
associated to $f$.

\begin{prop}(cf. \cite{annalen}, Prop.3.1)
 Let $f(x,y)$ be a germ 
of a plane curve with Milnor fiber $M_f$ and
\footnote{this assumption is a somewhat weaker 
than the one in \cite{annalen} but the argument works in this
case with no change} 
for which the semi-simple part of monodromy has order $N$.
Let $L_{f,N}$ the the link 
of the corresponding surface singularity 
\begin{equation}\label{surfacecover} 
z^N=f(x,y)
\end{equation}
Then there is the isomorphism of the mixed Hodge structures:
\begin{equation}\label{lemmaisomorphism}
Gr^W_3H^2(L_{f,N})(1)=Gr_1^WH^1(M_f)
\end{equation}
where the mixed Hodge structure on the left is the Tate twist of 
the mixed Hodge structure constructed in 
\cite{durfee} 
 and the one on the right is the mixed Hodge structure 
on vanishing cohomology constructed in \cite{vancoh}.
\end{prop}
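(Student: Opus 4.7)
The plan is to exploit the fact that the surface $X = \{z^N = f(x,y)\}$ is a cyclic $\ZZ/N\ZZ$-Galois cover of $\CC^2$ branched along $\{f=0\}$, so that $L_{f,N}$ is the $N$-fold cover of a small $3$-sphere $S^3_\epsilon$ branched along the link $K_f$ of the plane curve, with deck transformation $z \mapsto \zeta_N z$. The resulting $\ZZ/N\ZZ$-symmetry lets one match the two mixed Hodge structures character-by-character.

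First I would establish the underlying isomorphism of vector spaces. The infinite cyclic cover of $S^3_\epsilon \setminus K_f$ is homotopy equivalent to $M_f$, with generator of deck transformations identified with the monodromy $T$ on $H^1(M_f)$. A Sakuma-type formula (cf.\ Theorem \ref{homologycovers}(2) and \cite{sakuma}) applied to the $\ZZ/N\ZZ$ quotient, together with Poincar\'e duality for the compact oriented $3$-manifold $L_{f,N}$, yields
$$H^2(L_{f,N},\CC) \;\cong\; \bigoplus_{\lambda^N=1,\,\lambda\ne 1} H^1(M_f,\CC)_\lambda,$$
where $H^1(M_f)_\lambda$ is the $\lambda$-eigenspace of the semi-simple part $T_s$; since $T_s$ has order exactly $N$ every nontrivial eigenvalue appears. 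On the Steenbrink side, the limit weight filtration satisfies $Gr^W_1 H^1(M_f) = \bigoplus_{\lambda \ne 1} H^1(M_f)_\lambda$, with eigenvalue $1$ contributing only to $Gr^W_2$ through the unipotent part of $T$. On the Durfee side, $Gr^W_3 H^2(L_{f,N})$ is the transcendental part of $H^1$ of the exceptional divisor of any resolution of $X$, and the Tate twist $(1)$ places it in weight $1$; thus both graded pieces are pure of weight $1$ with the same underlying vector space.

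For the Hodge filtration I would work with a single compatible resolution. Starting from an embedded resolution of $\{f=0\} \subset \CC^2$, base-change and normalization yield a good resolution of $X$ whose exceptional divisor consists of $\ZZ/N\ZZ$-cyclic covers of the exceptional curves of the plane resolution. The Chevalley-Weil formula expresses the Hodge decomposition of each character eigenspace of $H^1$ of these cover curves in terms of the branching multiplicities, and the identical multiplicities enter the Varchenko-Steenbrink description of the Hodge filtration of each $\lambda$-eigenspace of $T_s$ on $H^1(M_f)$. Matching them eigenspace-by-eigenspace gives the sought identification.

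The main obstacle is certifying that no Tate shift beyond the indicated $(1)$ appears, since Durfee's MHS is constructed via residues on the surface resolution while Steenbrink's uses a limit of nearby cycles on the plane curve. I would reconcile the two by passing through the MHS on a punctured neighborhood $U^* = (U \cap X) \setminus \{0\}$ of the singular point, which retracts onto $L_{f,N}$ (hence carries Durfee's filtration) and admits the map $(x,y,z) \mapsto z^N - f$ to a punctured disk, whose monodromy realizes the Milnor fiber of the surface germ and, via Thom-Sebastiani, the eigenspace decomposition of $H^1(M_f)$ with its Steenbrink MHS. Checking that both constructions are compatible with this intermediate object pins down the Tate twist and completes the proof.
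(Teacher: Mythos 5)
The paper itself offers no proof of this proposition: it is quoted from \cite{annalen} (Prop.~3.1), the footnote only asserting that the weakened hypothesis does not affect the argument given there. So your proposal can only be judged on its own terms; the geometric setting you choose (resolving $z^N=f$ by base change and normalization from an embedded resolution of $f$, working $\ZZ/N\ZZ$-equivariantly) is the right one, but two steps fail as written. First, the identity $Gr^W_1H^1(M_f)=\bigoplus_{\lambda\ne 1}H^1(M_f)_\lambda$ is false in general: on the non-unipotent part the weight filtration is the monodromy weight filtration of $\log T_u$ centered at $1$, so a Jordan block of size two at an eigenvalue $\lambda\ne 1$ contributes to $Gr^W_0$ and $Gr^W_2$, not to $Gr^W_1$. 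Such blocks do occur for reduced germs (e.g.\ any irreducible germ with two Puiseux pairs, whose monodromy has infinite order by A'Campo while $H^1(M_f)_1=0$), and the hypothesis here constrains only the semisimple part $T_s$ --- this is exactly the weakening recorded in the footnote --- so this is not an excluded case. Correspondingly $H^2(L_{f,N})$ cannot then be pure of weight $3$ (otherwise the dimension count you invoke would contradict the proposition itself); it has a nonzero weight-$4$ piece, dual to the weight-$0$ part of $H^1(L_{f,N})$ coming from cycles in the dual resolution graph. Thus your Sakuma-plus-Poincar\'e-duality step identifies the full spaces $H^2(L_{f,N},\CC)$ and $\bigoplus_{\lambda\ne1}H^1(M_f)_\lambda$, but the equality of the two \emph{graded} pieces in the statement is precisely what remains to be proved and is not a consequence of that count.

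Second, even on the correct graded pieces, your concluding step matches only Hodge numbers eigenspace-by-eigenspace (Chevalley--Weil on the cover curves against the Steenbrink--Varchenko numbers for $H^1(M_f)_\lambda$). Equality of Hodge numbers does not give an isomorphism of mixed Hodge structures with their $\QQ$-structures --- two weight-one Hodge structures with equal $h^{1,0}$ need not be isomorphic (non-isogenous elliptic curves) --- and the rational structure is exactly what the paper needs, since the proposition feeds into isogeny statements (Theorem \ref{localcontribution}, Theorem \ref{mainresult}). What is required is an actual comparison morphism, e.g.\ the semistable-reduction/A'Campo--Steenbrink description exhibiting $Gr^W_1H^1(M_f)$ as $\bigoplus H^1$ of the $N$-fold cyclic covers of the exceptional curves of the embedded resolution of $f$ --- which are the exceptional curves of your resolution of $z^N=f(x,y)$ --- matched against Durfee's description of $Gr^W_3H^2(L_{f,N})$ in terms of the same curves. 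Your last paragraph gestures at such a comparison via the punctured neighborhood and Thom--Sebastiani, but it is left entirely as a plan; note moreover that Thom--Sebastiani for $z^N-f$ describes $H^2$ of its Milnor fiber, not the cohomology of the link, so the Tate twist and the identification over $\QQ$ are not actually established there.
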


Below we shall use Albanese varieties for non reduced germs 
and those can be define using the abelian part of the 1-motif of
mixed Hodge structure $Gr^W_3H^2(L_{f,N})(1)$.

Recall finally that the local Albanese can be described 
in terms of a resolution of the singularity (\ref{surfacecover}).
\begin{theorem}\label{localcontribution}
 (cf. \cite{annalen} Theorem 3.11)
Let $f(x,y)=0$ be a singularity 
let $N$ be the order of the semi-simple part of its monodromy operator.
The local Albanese variety of germ $f(x,y)=0$ 
is isogenous to the product of the Jacobians of the exceptional 
curves of positive genus for a resolution of the singularity
(\ref{surfacecover}). 
\end{theorem}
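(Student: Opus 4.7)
The plan is to exploit the isomorphism of mixed Hodge structures stated in the preceding proposition, which replaces the MHS on the vanishing cohomology of $f$ by the MHS on the link $L_{f,N}$ of the surface singularity $z^N=f(x,y)$. The MHS on $H^{*}(L_{f,N})$ is accessible via Durfee's construction from a good resolution, so the task becomes identifying $Gr^W_3 H^2(L_{f,N})(1)$ explicitly in terms of data attached to such a resolution and then verifying that the resulting polarized Hodge structure of weight one yields, via the 1-motif construction, the asserted product of Jacobians.

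First I would fix an embedded resolution $\pi\colon Y \to V$ of the germ $V\colon z^N=f(x,y)$ whose exceptional divisor $E = \bigcup_i E_i$ has simple normal crossings with $E_i$ smooth projective curves. Durfee's construction expresses the MHS on $H^{*}(L_{f,N})$ through the long exact sequence comparing $Y$, a tubular neighborhood of $E$ and its boundary $L_{f,N}$; in this description the weight-three graded piece of $H^2(L_{f,N})$ fits as a subquotient of $H^1(E)$ and, using that $Y$ is smooth and the neighborhood retracts to $E$, is canonically identified with the weight-one part $Gr^W_1 H^1(E)$ of the MHS on the normal crossing divisor $E$.

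Next I would invoke the standard description of the MHS on a simple normal crossing curve: its weight filtration has $W_0 H^1(E) = H^1(\Gamma,\QQ)$ coming from the dual graph $\Gamma$, while
\begin{equation*}
Gr^W_1 H^1(E) \;\cong\; \bigoplus_i H^1(E_i)
\end{equation*}
is the direct sum of the weight-one pure polarized Hodge structures on the individual smooth components $E_i$. After the Tate twist in the preceding proposition, $Gr^W_3 H^2(L_{f,N})(1)$ is therefore a pure polarized Hodge structure of weight one isomorphic to $\bigoplus_i H^1(E_i)$. Applying Deligne's 1-motif recipe, the abelian part of the associated 1-motif is exactly the dual complex torus $\bigoplus_i J(E_i)/(\text{lattice})$, in which components $E_i$ of genus zero contribute trivially; this leaves the product $\prod_{g(E_i)>0} J(E_i)$, as claimed, up to isogeny.

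The main obstacle will be a careful comparison of polarizations and independence of the resolution. The local Albanese carries the polarization coming from the intersection form on $Gr^W_3 H^2(L_{f,N})(1)$, whereas each $J(E_i)$ carries its own theta polarization; these need not agree on the nose, but the form induced on each summand $H^1(E_i)$ is a nonzero rational multiple of the intersection pairing on $H^1(E_i,\ZZ)$ (since distinct $E_i$ meet in points and so contribute only to lower weight pieces or at worst to off-diagonal rational terms), so the comparison map is an isogeny rather than an isomorphism. Independence of the chosen good resolution follows from functoriality of the MHS under blow-ups: any blow-up at a smooth point of a component of $E$ introduces an extra $\PP^1$ to the exceptional divisor, which contributes nothing to $\bigoplus_i H^1(E_i)$ and thus does not change the isogeny class.
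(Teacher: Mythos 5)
Your argument is essentially the one this paper relies on: the theorem is quoted from \cite{annalen} (Theorem 3.11), where it is proved exactly by combining the isomorphism $Gr^W_3H^2(L_{f,N})(1)\cong Gr^W_1H^1(M_f)$ with Durfee's description of the mixed Hodge structure on the link in terms of a good resolution, so that the relevant weight-one polarized piece becomes $\bigoplus_i H^1(E_i)$ and the abelian part of the 1-motif is $\prod_{g(E_i)>0}J(E_i)$ up to isogeny, independently of the resolution. The only imprecision is that the weight-three part of $H^2(L_{f,N})$ arises via duality from $H^3(T,L_{f,N})\cong H_1(E)$ rather than directly as a subquotient of $H^1(E)$, but since polarized weight-one Hodge structures are self-dual up to a Tate twist this does not affect the isogeny conclusion.
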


\begin{example}\label{examplenonreduced} Consider the non-reduced singularity
\begin{equation}\label{nonreducedsing}
f(x,y)=x^{a_1}(x-y)^{a_2}y^{a_3} \ \ \ a_1+a_2+a_3=n
\end{equation} having the ordinary triple point 
as the corresponding reduced germ. In this case, the local Albanese variety 
is isogeneous to the Jacobian of plane curve whose affine portion is given by
\begin{equation}\label{exceptionalpositive}
v^n=u^{a_1}(u-1)^{a_2}
\end{equation}
Indeed, resolution of (\ref{nonreducedsing}) can be achieved by 
a single blow up. The multiplicity of the exceptional curve 
is equal to $n$. It follows from A'Campo's formula that  
the characteristic polynomial of the monodromy is $(t^n-1)(t-1)$ 
and that the order of the monodromy operator acting 
on $Gr^W_1H^1(M_f)$ is equal to $n$.
A resolution of $n$-fold cyclic cover of the surface 
singularity
\begin{equation}\label{cycliccoversurface}
z^n=x^{a_1}(x-y)^{a_2}y^{a_3}
\end{equation} 
can be obtained by resolving cyclic quotient 
singularities of the normalization of the pullback of 
this covering to the blow up of $\CC^2$ resolving $f_{red}(x,y)=0$
(here $f_{red}$ is corresponding reduced polynomial).
This pull-back has as an open subset the surface 
given in $\CC^3$ by equation:
$w^n=u^nv^{a_1}(v-1)^{a_2}$.
Such resolution of surface (\ref{cycliccoversurface}) has 
only one exceptional curve of positive genus and this  
exceptional curve is the $n$-fold cyclic cover of $\PP^1$ ramified at 3 points. 
The monodromies of this $n$-cover around ramification points are 
multiplications by $exp({{2 \pi \sqrt{-1}a_i}\over n}), i=1,2,3$. 
This allows to identify the exceptional curve with curve
(\ref{exceptionalpositive}).
It follows from the Theorem \ref{localcontribution}
that the local Albanese variety of singularity (\ref{nonreducedsing}), 
as was claimed, 
is isogenous to the Jacobian of curve (\ref{exceptionalpositive}).
\end{example}

\section{Jacobians of abelian covers of a line}\label{linejacobians}

The following will be used in the proof of the theorem \ref{mainresult}.

\begin{theorem}\label{abcyccovers}
 Let $X_n$ be the abelian cover 
of $\PP^1$ ramified at $\A=\{ a_0,a_1,...a_k\}  \subset \PP^1$ 
corresponding to the surjection $\pi_1(\PP^1\setminus \A)\rightarrow
H_1(\PP^1\setminus \A,\ZZ_n)$. 
Let $A_i\in \NN,i=0,....,k$ be a collection 
of integers such that 
\begin{equation}\label{conditionona}
 \sum_{i=0}^{i=k} A_i=0 \ (mod \ n), 1 \le A_i <n \ \ \ 
{\rm gcd}(n,A_0,...,A_k)=1
\end{equation}
Denote by $X_{n\vert A_0,....,A_k}$ a smooth 
 model of 
the cyclic cover of $\PP^1$ which affine portion is given by 
\begin{equation}\label{cycliccover} 
  y^n=(x-a_0)^{A_0} \cdot....\cdot (x-a_k)^{A_k}
\end{equation}
(by (\ref{conditionona}) this model is irreducible).
Then the Jacobian of $X_n$ is isogenous 
to the product of the isogeny components of the Jacobians of the 
curves $X_{n \vert A_0,...A_k}$.
\end{theorem}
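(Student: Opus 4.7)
The plan is to exploit the Galois action of $\Gamma := H_1(\PP^1\setminus \A, \ZZ_n) \cong \ZZ_n^{k}$ on a smooth projective model $\bar X_n$ of the unramified $\Gamma$-cover of $\PP^1\setminus\A$. Since $\Gamma$ is abelian, the representation on $H^0(\bar X_n,\Omega^1)$ splits into one-dimensional isotypic components indexed by characters $\xi\in\widehat{\Gamma}$, and hence, up to isogeny, the Jacobian decomposes as
\[
J(X_n)\sim \prod_{[\xi]} J(X_n)_{[\xi]},
\]
where $[\xi]$ runs over the orbits of nontrivial characters under automorphisms of $\widehat\Gamma$. The goal is to identify each factor $J(X_n)_{[\xi]}$ with an isogeny summand of some $J(X_{n|A_0,\ldots,A_k})$ satisfying (\ref{conditionona}).

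A character $\xi\in\widehat{\Gamma}$ is encoded by a tuple $(b_0,\ldots,b_k)\in(\ZZ/n)^{k+1}$ with $\sum_i b_i\equiv 0\pmod n$, via $\xi(\gamma_i)=\exp(2\pi\sqrt{-1}\,b_i/n)$ on the meridians $\gamma_i$. First I would prove the following arithmetic lemma: every nontrivial $\xi$ can be factored as $\xi=\chi_0^{r}\circ\pi_A$, where $A=(A_0,\ldots,A_k)$ is some primitive tuple satisfying (\ref{conditionona}), $\pi_A\colon \Gamma\to\ZZ_n$ is the surjection $(g_0,\ldots,g_k)\mapsto \sum_i A_i g_i$, and $\chi_0$ is a generator of $\widehat{\ZZ_n}$. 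Setting $d=\gcd(n,b_0,\ldots,b_k)$, one takes $r=d$, picks $A_i\equiv b_i/d\pmod{n/d}$, and then uses the Chinese Remainder Theorem applied to the primes dividing $d$ to lift the $A_i$'s to integers in $\{1,\ldots,n-1\}$ with $\gcd(n,A_0,\ldots,A_k)=1$, while preserving $\sum_i A_i\equiv 0\pmod n$.

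Next, one checks that the quotient $\bar X_n/\ker(\pi_A)$ is biholomorphic to $\bar X_{n|A_0,\ldots,A_k}$: under $\pi_A$ the meridian $\gamma_i$ maps to $A_i$, of order $n/\gcd(n,A_i)$ in $\ZZ_n$, matching the ramification of $y^n=\prod(x-a_i)^{A_i}$ over $a_i$, while irreducibility of the affine model is forced by (\ref{conditionona}). A local computation in uniformizers $z\mapsto z^{e}$ at ramification points shows that on a pair of smooth projective curves $(\bar X_n,\bar X_{n|A_0,\ldots,A_k})$ the $\ker(\pi_A)$-invariant holomorphic $1$-forms on the top are exactly the pullbacks of holomorphic $1$-forms on the bottom. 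Since the quotient map $\rho_A$ is $\Gamma$-equivariant (with $\Gamma$ acting on the target through $\pi_A$), it restricts to an isomorphism
\[
\rho_A^*\colon H^0(\bar X_{n|A_0,\ldots,A_k},\Omega^1)_{\chi_0^{r}}\xrightarrow{\ \sim\ } H^0(\bar X_n,\Omega^1)_\xi
\]
for $\xi=\chi_0^{r}\circ\pi_A$.

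Passing to Jacobians, this identifies the orbit piece $J(X_n)_{[\xi]}$, up to isogeny, with the $[\chi_0^{r}]$-isotypic sub-abelian-variety of $J(X_{n|A_0,\ldots,A_k})$, which is by construction an isogeny summand of the latter. Summing over the orbits $[\xi]$ yields the asserted decomposition, and applying Theorem~\ref{homologycovers}(2) to both sides provides a dimension check confirming that no character has been overlooked. The main obstacle is the arithmetic reduction to primitive tuples: characters of $\Gamma$ of order strictly less than $n$ do not correspond directly to covers of the form $y^n=\prod(x-a_i)^{A_i}$ satisfying (\ref{conditionona}), and without the Chinese Remainder argument above the isogeny components they contribute would appear to be missing from the list of $J(X_{n|A_0,\ldots,A_k})$'s. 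Once the lemma is in hand, the rest is a routine application of Galois descent for holomorphic $1$-forms on abelian covers of curves.
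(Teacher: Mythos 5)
Your overall strategy is in substance the one the paper follows: decompose $H^{1,0}$ of the abelian cover into character eigenspaces, attach to a character the tuple obtained by dividing its exponent vector by the gcd, and identify each eigenspace with the corresponding eigenspace of the cyclic quotient cover (the paper does this via the explicit maps $\Phi_{n\vert A_0,\ldots,A_k}$ and an equality of multiplicities deduced from Theorem \ref{homologycovers}(2), where you use descent of $\ker(\pi_A)$-invariant $1$-forms; both give the same identification, and your descent step is fine).

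The genuine gap is in your arithmetic lemma, and it is not repairable by the CRT adjustment you propose. Take $k+1\ge 4$ branch points and a character $\xi$ with exponent vector $(b_0,\ldots,b_k)$ such that some $b_{i_0}\equiv 0 \pmod n$ while $d=\gcd(n,b_0,\ldots,b_k)=1$ (e.g.\ $n=5$, $(b_0,b_1,b_2,b_3)=(0,1,2,2)$). If $\xi=\chi_0^{r}\circ\pi_A$ with $A_{i_0}\not\equiv 0\pmod n$, then $rA_{i_0}\equiv 0$ forces $\gcd(r,n)>1$, and this gcd must divide every $b_i$, contradicting $d=1$; so no tuple with all $A_i\in\{1,\ldots,n-1\}$ works, and your CRT freedom, which lives only at primes dividing $d$, gives nothing here. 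These characters are not negligible: by Theorem \ref{homologycovers}(2) (or Corollary \ref{restrictedcharacters}) their contribution to $H^1$ of the compactified cover is the depth computed on the sub-divisor where $\xi$ is nontrivial, and it is nonzero in the example above, producing an isotypic piece isogenous to the Jacobian of $y^5=(x-a_1)(x-a_2)^2(x-a_3)^2$, which for a general position of the four points is not an isogeny component of any $X_{5\vert A_0,\ldots,A_3}$ with all exponents nonzero. So your claimed decomposition misses exactly these pieces. The way out — which is what the paper in effect does, since its sum (\ref{mapjacobians}) lets the exponents start at $0$ and its eigenspace computation (\ref{nontrivialitycondition}) is only carried out for characters nontrivial on every meridian, the remaining ones being governed by Corollary \ref{restrictedcharacters} — is to treat a character through the cyclic cover branched only at the points where it is nontrivial, i.e.\ to allow $A_i=0$ at the meridians killed by $\xi$ (equivalently, to include the curves $X_{n\vert A}$ attached to all subsets of $\A$). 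With the collection of target curves enlarged in this way (or with your lemma restricted to characters nontrivial on all meridians and the rest handled by induction on the number of branch points), your argument goes through.
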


\begin{remark}\label{fermat} If $k=2$ then the curve $X_n$ is biholomorphic
to Fermat curve $x^n+y^n=z^n$ in $\PP^2$, since as affine model 
of the abelian cover one can take the curve in $\CC^3$ given by 
$x^n=u, \ y^n=1-u$,
and the above theorem 
follows from the calculations in \cite{gross} containing 
explicit formulas for simple isogeny components of the 
Fermat curves.
\end{remark}

\begin{corollary}\label{abelianiscyclic}
 Let $X_{\Gamma}$ be a covering of $\PP^1$ 
with abelian Galois group $\Gamma$
ramified at $a_0,...,a_k \in \PP^1$. Then  
there exist a collection of curves, each being 
a cyclic covers (\ref{cycliccover}) of $\PP^1$, 
such that the Jacobian of 
$X_{\Gamma}$ is isogenous to a product of isogeny components 
of Jacobians of the curves in this collection.
\end{corollary}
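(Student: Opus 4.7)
The plan is to reduce Corollary \ref{abelianiscyclic} to Theorem \ref{abcyccovers} by dominating the $\Gamma$-cover $X_\Gamma$ by an $H_1(\PP^1\setminus\A,\ZZ_n)$-cover for appropriate $n$, and then using the fact that a finite map of smooth projective curves realizes the Jacobian of the base as an isogeny factor of the Jacobian of the total space.

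First I would choose $n$ to be any positive integer annihilating $\Gamma$ (for instance the exponent of $\Gamma$, or the order of $\Gamma$). Because $\Gamma$ is abelian and generated by the images of the meridians $\gamma_{a_0},\dots,\gamma_{a_k}$ of the punctures, the surjection $\pi_1(\PP^1\setminus\A)\to\Gamma$ factors through the maximal abelian quotient of exponent dividing $n$, giving a commutative diagram
\begin{equation*}
\pi_1(\PP^1\setminus\A)\twoheadrightarrow H_1(\PP^1\setminus\A,\ZZ_n)\twoheadrightarrow \Gamma.
\end{equation*}
Geometrically this produces a tower $X_n\to X_\Gamma\to\PP^1$ of ramified covers of $\PP^1$, in which $X_n\to X_\Gamma$ is a Galois cover with group $K=\ker(H_1(\PP^1\setminus\A,\ZZ_n)\to\Gamma)$. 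In particular $X_n$ is the cover from Theorem \ref{abcyccovers} with the same ramification set $\A$.

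Next I would invoke the standard fact that for a finite surjective morphism $f:Y\to Z$ of smooth projective curves, the pullback $f^\ast:\mathrm{Jac}(Z)\to\mathrm{Jac}(Y)$ has finite kernel, so $\mathrm{Jac}(Z)$ is isogenous to an abelian subvariety of $\mathrm{Jac}(Y)$; by Poincar\'e's complete reducibility theorem, $\mathrm{Jac}(Y)$ is isogenous to the product of $\mathrm{Jac}(Z)$ and a complementary abelian variety. Applied to $X_n\to X_\Gamma$, this shows that $\mathrm{Jac}(X_\Gamma)$ is, up to isogeny, a factor of $\mathrm{Jac}(X_n)$, hence every simple isogeny component of $\mathrm{Jac}(X_\Gamma)$ is a simple isogeny component of $\mathrm{Jac}(X_n)$.

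Finally I would apply Theorem \ref{abcyccovers} to $X_n$: this theorem writes $\mathrm{Jac}(X_n)$ (up to isogeny) as a product of simple isogeny components of Jacobians of the cyclic covers $X_{n\mid A_0,\dots,A_k}$ for collections $(A_0,\dots,A_k)$ satisfying (\ref{conditionona}). Combining the two previous steps, every simple isogeny component of $\mathrm{Jac}(X_\Gamma)$ equals a simple isogeny component of some $\mathrm{Jac}(X_{n\mid A_0,\dots,A_k})$, and $\mathrm{Jac}(X_\Gamma)$ is isogenous to the product of these components with appropriate multiplicities. There is essentially no obstacle here beyond checking that $n$ can be chosen so that $\Gamma$ is a quotient of $H_1(\PP^1\setminus\A,\ZZ_n)$, which is automatic from the fact that $\Gamma$ is a finite abelian group generated by the meridian images; the real work has already been done in Theorem \ref{abcyccovers}.
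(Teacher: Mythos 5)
Your proposal is correct and follows essentially the same route as the paper: dominate $X_\Gamma$ by the cover $X_n$ (the paper takes $n=\mathrm{lcm}$ of the orders of the meridian images, which coincides with the exponent of $\Gamma$ you use), observe that $\mathrm{Jac}(X_\Gamma)$ is then an isogeny factor of $\mathrm{Jac}(X_n)$, and conclude via Theorem \ref{abcyccovers}. Your explicit appeal to the finite kernel of pullback plus Poincar\'e reducibility is just a spelled-out version of the paper's remark that $\mathrm{Jac}(X_\Gamma)$ is a quotient of $\mathrm{Jac}(X_n)$.
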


\begin{proof} Let $\pi_{\Gamma}: H_1(\PP^1\setminus \bigcup_{i=0}^{i=k} a_i,\ZZ)
\rightarrow \Gamma$ be the surjection corresponding to the 
covering $X_{\Gamma}$, $\delta_i \in 
H_1(\PP^1\setminus \bigcup_{i=0}^{i=k} a_i,\ZZ), i=0,...,k$ 
be the boundary of a small 
disk about $a_i, i=0,...,k$ and 
let $n_i$ be the order of the element 
$\pi_{\Gamma}(\delta_i)\in \Gamma$. Then for $n=lcm(n_0,...,n_k)$ 
one has a surjection $H_1(\PP^1\setminus \bigcup_{i=0}^{i=k} a_i,\ZZ/n\ZZ)
 \rightarrow \Gamma$ and hence a dominant map $X_n \rightarrow X_{\Gamma}$.
In particular the Jacobian of $X_{\Gamma}$ is a quotient of the 
Jacobian of $X_n$ and the claim follows.
\end{proof}

\begin{proof}[Proof of the theorem  \ref{abcyccovers}]
 We shall assume below that one of ramification points, say $a_0$, is the 
point of $\PP^1$ at infinity.

 A projective model of  $X_n$ can be obtained as 
the projective closure in $\PP^{k+1}$ 
(which homogeneous coordinates we shall denote $x,z_1,...,z_k,w$)
of the complete intersection in $\CC^{k+1}$ given by the equations: 
\begin{equation}\label{abelianmodel}
  z_1^n=x-a_1,.....,z_k^n=x-a_k
\end{equation}
The Galois covering $X_n \rightarrow \PP^1$ is given 
by the restriction on this complete intersection 
of the projection of $\PP^{k+1}$ from the subspace $x=w=0$.

For any $(A_0,A_1,...,A_k)$ as above, consider the map  
\begin{equation}
\Phi_{n\vert,A_0,....,A_k}: X_n \rightarrow X_{n \vert A_0,A_1,...A_k}
\end{equation}  
which in the chart $w \ne 0$ is the restriction 
on $X_n$ of the map $\CC^{k+1} \rightarrow \CC^2$ given by: 
\begin{equation}
   \Phi_{A_1,...,A_k}: (z_1,...,z_k,x) \rightarrow (y,x)=(z_1^{A_1}....z_k^{A_k},x)
\end{equation}

The map $\Phi_{n\vert,A_0,....,A_k}$ is the map of the covering spaces of $\PP^1$
corresponding to the surjection of the Galois groups
 $$H_1(\PP^1\setminus \bigcup_{i=0}^{i=k} a_i,\ZZ/n\ZZ) 
\rightarrow \ZZ/n\ZZ$$ 
which is given by 
\begin{equation}
    (i_0,i_1,...,i_k) \rightarrow \sum_j i_jA_j \ \ {\rm mod} \ n
\end{equation}

The maps $\Phi_{n \vert A_0,....,A_k}$ induce the maps of Jacobians:
\begin{equation}\label{mapjacobians}
   \bigoplus_{A_0,...,A_k, 0 \le A_i <n-1} {(\Phi_{n \vert A_0,...,A_k})}_*: Jac(X_n) \rightarrow \bigoplus 
Jac(X_{n \vert A_0,....,A_k})
\end{equation}

We claim that the kernel of a  map (\ref{mapjacobians}) is finite.
This clearly implies the Theorem \ref{abcyccovers}.
Finiteness for the kernel of morphism (\ref{mapjacobians}) will follow from 
surjectivity of the map of cotangent spaces at respective identities
of Jacobians (\ref{mapjacobians}):
\begin{equation}\label{cohomologymap}
     \bigoplus_{A_0,...,A_k}H^{1,0}(X_{n \vert A_0,....,A_k},\CC) \rightarrow H^{1,0}(X_n,\CC)
\end{equation} 

For each $\chi \in \Char \ZZ/n\ZZ$ let $m_{\chi}^{1,0}(n\vert A_0,...,A_k)$
(resp. $m_{\chi}^{0,1}(n\vert A_0,...,A_k)$) denotes the dimension 
of isotypical summand of $H^{1,0}(X_{n \vert A_0,....,A_k},\CC)$ \newline
(resp. $H^{0,1}(X_{n \vert A_0,....,A_k},\CC)$) on which $\ZZ/n\ZZ$ acts
via the character $\chi$. 
Similarly $m^{1,0}_{\Phi_{n \vert A_0,....,A_k}^*(\chi)}(n)$
(resp. $m^{0,1}_{\Phi_{n \vert A_0,....,A_k}^*(\chi)}(n)$) will denote
the dimension of the eigenspace of the pull back 
$\Phi_{n \vert A_0,....,A_k}^*(\chi) \in \Char 
H_1(\PP^1\setminus \A,\ZZ/n\ZZ)$ for the action of the covering 
group of $X_n \rightarrow \PP^1$ on $H^{1,0}(X_n)$ (resp. $H^{0,1}(X_n)$).

It follows from Theorem \ref{homologycovers} (2), that the depth of 
 $\chi$ considered as a character of 
$H_1(\PP^1\setminus \A,\ZZ)$ can be written as:
\begin{equation}
 d(\chi)=m_{\chi}^{0,1}(n\vert A_0,...,A_k)+m_{\chi}^{1,0}(n\vert A_0,...,A_k)=
\end{equation}
$$ m^{0,1}_{\Phi_{n \vert A_0,....,A_k}^*(\chi)}(n)+
m^{1,0}_{\Phi_{n \vert A_0,....,A_k}^*(\chi)}(n)
$$
Moreover, one has inequalities:
\begin{equation}
  m_{\chi}^{0,1}(n\vert A_0,...,A_k) \le m^{0,1}_{\Phi_{n \vert A_0,....,A_k}^*(\chi)}(n)
\end{equation}
$$m_{\chi}^{1,0}(n\vert A_0,...,A_k) \le m^{1,0}_{\Phi_{n \vert A_0,....,A_k}^*(\chi)}(n)$$
\bigskip
Hence, in fact,
\begin{equation}\label{equalitymultuplicities}
m_{\chi}^{0,1}(n\vert A_0,...,A_k) = m^{0,1}_{\Phi_{n \vert A_0,....,A_k}^*(\chi)}(n)
\end{equation}
$$m_{\chi}^{1,0}(n\vert A_0,...,A_k) = m^{1,0}_{\Phi_{n \vert A_0,....,A_k}^*(\chi)}(n)$$

Now let us fix $\chi \in \Char (H_1(\PP^1\setminus \A,\ZZ/n\ZZ))$, i.e.  
a character of the Galois group of the cover $X_n \rightarrow \PP^1$,
such that its value on the cycle 
$\delta_i \in  H_1(\PP^1\setminus \A,\ZZ/n\ZZ)$ 
corresponding to $a_i \in \PP^1, i=0,...,m$ satisfies:
\begin{equation}\label{nontrivialitycondition}
\chi(\delta_i)=
exp\left( {{2 \pi \sqrt{-1} j_i}\over n}\right) \ne 1, (1 \le j_i <n)
\end{equation}
and let $J=gcd (j_0,....,j_k)$. The collection 
of integers $A_i={j_i \over J}$ satisfies condition (\ref{conditionona}).
Denote by $\Gamma_0$ the cyclic group $\chi(H_1(\PP^1\setminus \A,\ZZ))
\subset \CC^*$. Then $\chi$ can be considered as a character 
$\chi' \in \Char(\Gamma_0)$ and then $\chi=\pi^*(\chi')$ where
$\pi$ is projection of the abelian cover  with covering group
$\Gamma$ onto the cyclic cover with the covering group $\Gamma_0$.
It follows from (\ref{equalitymultuplicities}) 
that any isotypical component in $H^{1,0}(X_n,\CC)_{\chi}$ is the image 
of the isotypical component of a cyclic covers and hence the map 
(\ref{cohomologymap}) is surjective which concludes the proof.
\end{proof}

We shall finish this section with an explicit formula
for $dim H^0(X_n,\Omega^1_{X_n})_{\chi}$ i.e. the 
multiplicity of the isotypical component of the covering group 
of abelian cover acting on the space of holomorphic 1-forms.

\begin{prop}\label{abelianeigenspace}
Let the values of a character $\chi \in \Char H_1(\PP^1\setminus \A,\ZZ/n\ZZ)$, 
$\chi\ne 1$,
be given as in (\ref{nontrivialitycondition}). Assume that 
$J=gcd(j_0,...j_k)=1$ and 
let $M=\sum_i (n-j_i)$. Then
\begin{equation}\label{formulaabelianeigenspace}
dim H^{1,0}(X_n)_{\chi}=\left[{{M}\over n}\right]
\end{equation}
\end{prop}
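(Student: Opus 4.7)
The strategy is to reduce the computation of $\dim H^{1,0}(X_n)_\chi$ for the abelian cover $X_n$ to the analogous question on a suitably chosen cyclic subcover, where a classical formula of Archinard \cite{Arch} applies directly.

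Concretely, set $A_i = j_i$ for $i = 0, \ldots, k$. Since $\chi$ is a character of $H_1(\PP^1 \setminus \A, \ZZ/n\ZZ)$, the relation $\sum_i \delta_i = 0$ in $H_1$ forces $\sum_i j_i \equiv 0 \pmod n$. Combined with the hypotheses $\gcd(j_0, \ldots, j_k) = 1$ and $1 \le j_i < n$ coming from (\ref{nontrivialitycondition}), this shows that $(A_0, \ldots, A_k)$ satisfies condition (\ref{conditionona}), so the cyclic cover $X_{n \vert A_0, \ldots, A_k}$ defined by $y^n = \prod_i (x - a_i)^{j_i}$ is irreducible. The intermediate quotient $\Phi_{n \vert A_0, \ldots, A_k}: X_n \to X_{n \vert A_0, \ldots, A_k}$ constructed in the proof of Theorem \ref{abcyccovers} has the key property that the pullback of the tautological generating character $\chi'$ of $\ZZ/n\ZZ$ coincides with $\chi$: indeed $\Phi_*(\delta_i) = A_i = j_i$, so $(\Phi^* \chi')(\delta_i) = \zeta_n^{j_i}$.

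Next I would invoke the identity of eigenspace multiplicities (\ref{equalitymultuplicities}) established in the proof of Theorem \ref{abcyccovers}, which in this particular case reads
\begin{equation*}
   \dim H^{1,0}(X_n)_\chi \;=\; \dim H^{1,0}(X_{n \vert j_0, \ldots, j_k})_{\chi'}.
\end{equation*}
The right-hand side is now the multiplicity of a generating character on the eigenspace decomposition of $H^{1,0}$ of a cyclic cover of $\PP^1$, and I would evaluate it using the Archinard formula \cite{Arch}, which expresses it in terms of fractional parts $\{-A_i/n\}$. Substituting $A_i = j_i$ and using $\{-j_i/n\} = (n - j_i)/n$ for $1 \le j_i \le n-1$, the sum of these fractional parts becomes $M/n$; together with the normalization constant in the Archinard formula, this yields the claimed expression $[M/n]$.

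The main obstacle is the identification in the reduction step: one must verify that the pullback $\Phi^*(\chi')$ is literally the prescribed $\chi$ and not merely some character in its Galois orbit, so that (\ref{equalitymultuplicities}) can be applied to this particular $\chi$ rather than to an arbitrary character sitting above $\chi'$. Once that identification is secured, the remaining work is a routine application of the well-studied arithmetic of eigenspaces of holomorphic differentials on cyclic covers of the line.
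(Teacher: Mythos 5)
Your reduction strategy is legitimate in outline and is not circular: the multiplicity identity (\ref{equalitymultuplicities}) is obtained in the proof of Theorem \ref{abcyccovers} from Sakuma's formula (Theorem \ref{homologycovers}(2)) together with injectivity of pullback on forms, independently of Proposition \ref{abelianeigenspace}, so running the remark after Proposition \ref{cycliceigenspace} backwards is a coherent plan. Note, however, that this is genuinely not the paper's proof: the paper works directly on the complete-intersection model (\ref{projectiveequation}) of $X_n$, represents the $\chi$-eigenforms as Griffiths residues (\ref{meroform}), imposes the degree condition (\ref{conditionone}) and the adjunction condition (\ref{conditiontwo}) at the singular point lying over $a_0=\infty$, and counts the admissible exponents $s$; it is self-contained and does not rely on Archinard's formula.

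The genuine gap is in the step you call routine. Because $\chi$ is a character of $H_1(\PP^1\setminus \A,\ZZ/n\ZZ)$ and $\sum_i\delta_i=0$ there, one automatically has $\sum_{i=0}^{k} j_i\equiv 0 \ (\mathrm{mod}\ n)$. Hence with $A_i=j_i$ the curve (\ref{cycliccover}) is unramified over $\infty$, the term $\left\langle i\sum A_s/n\right\rangle$ in (\ref{archinard}) vanishes, and your computation returns $\sum_{i=0}^{k}(n-j_i)/n$, which is an integer (so the floor you quote is vacuous) and is \emph{not} the eigenspace dimension: for a cyclic cover unramified at infinity the classical count carries an extra $-1$, i.e. the dimension is $-1+\sum_{i=0}^{k}\left\langle -i'j_s/n\right\rangle$ for the appropriate power $i'$ (the convention issue ``$j=n-i$'' flagged right after Proposition \ref{cycliceigenspace}), so your answer is too large by exactly $1$. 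The Fermat case $k=2$ shows this concretely: $\dim H^{1,0}(X_n)_\chi\in\{0,1\}$ while $\sum_{i=0}^{2}(n-j_i)/n\in\{1,2\}$. The paper's residue computation resolves this automatically because $a_0$ is placed at infinity and its contribution is absorbed into the adjunction condition (\ref{conditiontwo}); the quantity actually counted in the proof is $M=\sum_{i=1}^{k}(n-j_i)$ (the point at infinity omitted), and $\left[\sum_{i=1}^{k}(n-j_i)/n\right]=-1+\sum_{i=0}^{k}(n-j_i)/n$ under (\ref{nontrivialitycondition}). To repair your argument you must supply this missing normalization (equivalently, track which branch point is excluded from $M$) and pin down precisely which character of the cyclic deck group corresponds to $\chi$ under (\ref{equalitymultuplicities}); as written, the formula you derive fails already for Fermat curves.
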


\begin{remark} If $J \ne 1$ then Prop. \ref{abelianeigenspace} 
yields  an expression for the dimension 
of isotypical component corresponding to $\chi \in \Char 
H_1(\PP^1\setminus \A,\ZZ/n\ZZ)$ as well. Indeed, this dimension coincides 
with the dimension of isotypical component for 
$\chi$ considered as the character of $H_1(\PP^1\setminus \A,\ZZ/
({n \over J}\ZZ))$.
\end{remark}

\begin{proof}[Proof of Prop. \ref{abelianeigenspace}]
The equations of the projective closure of the 
complete intersection (\ref{abelianmodel}) are 
\begin{equation}\label{projectiveequation}
z_i^n=(x-a_iw)w^{n-1}, i=1,...,k
\end{equation}
The only singularity of (\ref{projectiveequation}) occurs at $w=0, z_i=0, x=1$. 
Near it (\ref{projectiveequation}) 
is a complete intersection locally given by $z_i^n=w^{n-1}\gamma_i$ where 
$\gamma_i$ is a unit. It has $n^{k-1}$ branches 
(corresponding to the orbits of the action $(z_1,...,z_k)
\rightarrow (\zeta z_1,...,\zeta z_k), \zeta^n=1$)
each equivalent to $z_i=t^{n-1},w=t^n$. Therefore (\ref{projectiveequation}) 
is a ramified cover 
of $\PP^1$ with $k+1$ branching points $a_1,...,a_k,\infty$ 
over which it has 
$n^{k-1}$ preimages with ramification index $n$ at each ramification 
point.

The space  $H^0(\Omega^1_{X_n})$  (for a smooth model of 
(\ref{projectiveequation})) is 
generated by the residues of $k+1$-forms
\begin{equation}\label{meroform}
 {{z_1^{j_1-1}....z_k^{j_k-1}P(x,w)\Omega}\over {\Pi (z_i^n-(x-a_iw)w^{n-1})}}
\  (1 \le j_i) \ \ 
{\rm where} \ \ 
\sum_1^k (j_i-1)+degP+k+2=nk
\end{equation}
(cf. \cite[Theorem 2.9]{griffiths}). Here 
$$\Omega=\sum_i (-1)^{i-1}z_idz_1\wedge ...\widehat{dz_i}..\wedge dz_k \wedge dx \wedge dw+(-1)^{k+1}(
xdz_1\wedge ...\wedge dz_k \wedge dw-
wdz_1\wedge ...\wedge dz_k \wedge dx)$$ 
In the chart $x \ne 0$ such residue (of (\ref{meroform})) is given by: 
\begin{equation}{{z_1^{j_1-1}....z_k^{j_k-1}P(w)dw}\over {(z_1....z_k)^{n-1}}}
\end{equation}
Using (\ref{projectiveequation}), one can reduce powers of $z_i$ 
i.e. we can assume:
\begin{equation} 
1 \le j_i \le n-1
\end{equation}
and a basis of the eigenspace $H^0(\Omega^1_{X_n})_{\chi}$, with $\chi$  
as in  (\ref{nontrivialitycondition}), 
can be obtained by selecting $P(w)=w^s$ 
where $s$ must satisfy: 
\begin{equation}\label{conditionone}
\sum^k_1 (j_i-1)+s+k+2 \le nk \ \ \ \ \ 
\end{equation}
The adjunction condition assuring that the residue of (\ref{meroform})
will be regular on normalization of (\ref{projectiveequation}) is
\begin{equation}\label{conditiontwo}
-\sum_1^k (n-j_i)(n-1)+sn+n-1 \ge 0  \ \ \ \ 
\end{equation}
To count the number of solutions of (\ref{conditionone})
and (\ref{conditiontwo})
i.e. $dim H^0(\Omega^1_{X_n})_{\chi}$ with $\chi$ 
given by (\ref{nontrivialitycondition}),
let $\bar j_i=n-j_i$. Then $1 \le \bar j_i \le n-1$ and 
(\ref{conditionone}),(\ref{conditiontwo}) have form
$\sum_1^k (n-1-\bar j_i) +s +k+2 \le kn$, 
$-(\sum_1^k \bar j_i)(n-1)+sn+n > 0$.
Hence:
\begin{equation}\label{conditionthree}
s+2 \le \sum_1^k \bar j_i < {{(s+1)n} \over {n-1}}=s+1+
{{s+1} \over {n-1}} \ \ \ \  
\end{equation}
Notice that from (\ref{conditionone}) one has $s \le nk-k-2$ i.e. 
${{s+1} \over {n-1}} \le k-
{1 \over {n-1}}$ and hence $\sum_1^k \bar j_i \le k+s$. In particular 
possible values of $\sum_1^k \bar j_i$ are $s+2,....s+k$ and
therefore for given $\bar j_i$, parameter $s$ can take at most $k-1$ values
$\sum \bar j_i-2, ....,\sum \bar j_i-k$. In particular, multiplicities 
of the $\chi$-eigenspaces do not exceed $k-1$.

Let $\sum \bar j_i=M$. Then from (\ref{conditionthree}) one has
$M-1-{{M } \over n}  < s \le M-2$ and 
hence the number of possible values of $s$ is  
$$M-2-\left[M-1-{{M} \over n}\right]
=-1-\left[-{{M} \over n}\right]=
\left[{{M} \over n}\right]$$
as claimed in the Prop. \ref{abelianeigenspace}.
\end{proof}

\begin{remark} One can deduce the theorem \ref{abcyccovers} 
using Prop. \ref{abelianeigenspace}
and 
the following: 
\begin{prop}\label{cycliceigenspace}(\cite{Arch}, Prop. 6.5).
For $x \in \RR$ denote by $\left\langle x \right\rangle=x-[x]$ the fractional part of $x$.
Assume that $gcd(i,n)=1$ and $n$ does not divide either of $A_0,...,A_k$.
Then for the curve (\ref{cycliccover}) the dimension of the 
eigenspace corresponding to the eigenvalue $exp({{2 \pi \sqrt{-1} i}\over n})$
of the automorphism of
$H^{1,0}(X_{n,A_0,...,A_k},\CC)$ induced by the map 
$(x,y)\rightarrow (x,yexp(-{{2 \pi \sqrt{-1}} \over n}))$ 
equals to  
\begin{equation}\label{archinard} 
-\left\langle{{i\sum_0^k A_s} \over n}\right\rangle+\sum_0^k 
\left\langle{{iA_s} \over n}\right\rangle 
\end{equation}
\end{prop}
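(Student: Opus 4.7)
The plan is to produce an explicit basis of the $\exp(2\pi\sqrt{-1}\,i/n)$-eigenspace in $H^{1,0}(X_{n\vert A_0,\ldots,A_k})$ and count it directly, following the same pattern as in the proof of Proposition~\ref{abelianeigenspace}. Let $\zeta=\exp(2\pi\sqrt{-1}/n)$. Since the covering transformation sends $y\mapsto \zeta^{-1}y$ and the function field splits as $\CC(X_{n\vert A_0,\ldots,A_k})=\bigoplus_{j=0}^{n-1}\CC(x)\cdot y^j$, every differential in the $\zeta^i$-eigenspace can be written uniquely as $\omega = R(x)\,dx/y^i$ for some $R\in\CC(x)$, reducing the problem to deciding which $R$ yield regular $\omega$.

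The main local calculation is at each branch point $a_s$. Above $a_s$ there are $d_s=\gcd(A_s,n)$ preimages on the normalization, each with ramification index $n/d_s$. In a local parameter $t$ with $x-a_s = t^{n/d_s}$ one has $y = c\,t^{A_s/d_s}(1+O(t))$, and a direct pole-order computation yields
$$\mathrm{ord}_t\omega \;=\; (\mathrm{ord}_{a_s}R)\cdot\frac{n}{d_s} + \frac{n}{d_s} - 1 - \frac{iA_s}{d_s}.$$
Using $\gcd(i,n)=1$ and $n\nmid A_s$, a short case analysis of $iA_s\ \mathrm{mod}\ n$ shows that requiring $\mathrm{ord}_t\omega\ge 0$ at every preimage of $a_s$ is equivalent to $\mathrm{ord}_{a_s}R\ge \left\lfloor iA_s/n\right\rfloor$. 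A parallel calculation above $\infty$, where the ramification index equals $n/\gcd(n,\sum_s A_s)$, produces an upper bound on $\deg R$ which I would rewrite uniformly, via the fractional part $\left\langle i\sum_s A_s/n\right\rangle$, to handle both the ramified and unramified cases at infinity.

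Once these divisorial conditions are in place, writing $R(x)=\prod_s(x-a_s)^{\lfloor iA_s/n\rfloor}\cdot Q(x)$ presents the admissible space as the polynomials $Q$ of a bounded degree. Its dimension is one more than that bound, and an elementary rearrangement using $x-\lfloor x\rfloor = \left\langle x\right\rangle$ converts the count into the expression~(\ref{archinard}).

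The main technical obstacle is the infinity analysis: the local picture depends qualitatively on whether $n\mid \sum_s A_s$ (unramified infinity) or not (ramified infinity), yet the formula is uniform. Reconciling these two regimes into the single correction term $-\left\langle i\sum_s A_s/n\right\rangle$ is the delicate step; after that, the remaining counting is routine lattice-point enumeration, exactly as in the closing paragraphs of the proof of Proposition~\ref{abelianeigenspace}.
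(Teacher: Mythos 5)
The paper gives no proof of this proposition at all --- it is quoted verbatim from \cite{Arch} --- so your direct count of eigen-differentials is the natural thing to attempt, and the finite part of your analysis is correct: writing an eigenform as $R(x)\,dx/y^{i}$, the order formula you state at a point over $a_s$ is right and does give the condition $\mathrm{ord}_{a_s}R\ge \lfloor iA_s/n\rfloor$. The genuine gap is exactly the step you defer as ``delicate'': the two regimes at infinity do \emph{not} reconcile into the single correction term $-\left\langle i\sum_s A_s/n\right\rangle$. When $n\nmid\sum_s A_s$ the bound at infinity is $\deg R\le \lfloor i\sum_s A_s/n\rfloor-1$ and the resulting count is $\lfloor i\sum_s A_s/n\rfloor-\sum_s\lfloor iA_s/n\rfloor$, which is (\ref{archinard}). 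But when $n\mid\sum_s A_s$ --- and this is precisely the normalization (\ref{conditionona}) imposed on the curve (\ref{cycliccover}) in Theorem \ref{abcyccovers} --- the points over $\infty$ are unramified, the bound becomes $\deg R\le i\sum_s A_s/n-2$, and the count is $\sum_s\left\langle iA_s/n\right\rangle-1$, one less than (\ref{archinard}), whose correction term vanishes in this case.

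This is not a bookkeeping slip that more care at infinity would repair. Take $n=3$, $A_0=A_1=A_2=1$: the curve $y^3=x(x-1)(x+1)$ is an elliptic curve, so $\dim H^{1,0}=1$, while (\ref{archinard}) with $i=2$ predicts a $2$-dimensional eigenspace --- impossible under any convention for the induced automorphism. What your computation actually proves is the uniform statement $\dim=-1+\sum_p\left\langle iB_p/n\right\rangle$, the sum running over \emph{all} branch points of the smooth model including $\infty$ (with exponent $B_\infty\equiv-\sum_s A_s\ \mathrm{mod}\ n$); this coincides with (\ref{archinard}) exactly when $n\nmid\sum_s A_s$, i.e.\ when $\infty$ is a branch point, which is presumably the setting of \cite{Arch}. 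So to finish you must either add the hypothesis $n\nmid\sum_s A_s$, or insert the $-1$ correction (equivalently, account for the exponent at infinity) in the case $n\mid\sum_s A_s$ --- and since it is the latter case that the curve (\ref{cycliccover}) and the remark following the proposition actually use, this convention mismatch has to be confronted rather than smoothed over by the claimed uniform rewriting.
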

Indeed, the equality of multiplicities (\ref{equalitymultuplicities})
follows by comparison (\ref{formulaabelianeigenspace}) with 
(\ref{archinard})
since for $i=1$ (\ref{archinard}) yields $-{{\sum A_s} \over n}
+\left[{{\sum A_s} \over n}\right]+\sum {A_s \over n}=
\left[{{\sum A_s} \over n}\right]$
\end{remark}

\begin{remark} Special case of Prop. \ref{cycliceigenspace} appears 
also in \cite{annalen} (cf. lemma 6.1). The multiplicity of the 
latter corresponds to the case $j=n-i$ in Prop. \ref{cycliceigenspace}. 
\end{remark}

\section{Decomposition theorem for abelian covers of 
plane}\label{main}

The main result of this section 
relates the Albanese variety of ramified covers to 
the local Albanese varieties of ramification locus as follows.

\begin{theorem}\label{mainresult}Let $\C$ be a plane algebraic curve 
such that all irreducible components of its
characteristic variety contain the identity of 
$\Char\pi_1(\PP^2 \setminus \C)$. Assume that the 
Condition \ref{condition} is satisfied. 
Let $\pi_{\Gamma}: \pi_1(\PP^2 \setminus \C) \rightarrow \Gamma$ be 
a surjection onto a finite abelian group. Then the Albanese variety 
of the abelian cover $\bar X_{\Gamma}$ ramified over $\C$ and  
associated with $\pi_{\Gamma}$ is isogenous 
to a product of isogeny components of local Albanese varieties 
of possibly non-reduced germs having as reduced singularity a singularity 
of $\C$.

\end{theorem}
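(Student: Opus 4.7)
The plan is to reduce the statement to Theorem \ref{abcyccovers} by decomposing the Albanese into $\chi$-isotypical pieces indexed by $\Char\Gamma$, using Arapura's description of components of characteristic varieties together with Condition \ref{condition} to match each piece with a cover of $\PP^1$, and then identifying the resulting cyclic covers with local Albanese varieties of non-reduced germs via Example \ref{examplenonreduced}.

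First, I would decompose the Hodge structure
\begin{equation}
H^1(\bar X_{\Gamma},\CC)=\bigoplus_{\chi\in \Char\Gamma}H^1(\bar X_{\Gamma},\CC)_{\chi},
\end{equation}
which is respected by the level-one mixed Hodge structure defining $Alb(\bar X_{\Gamma})$. By Theorem \ref{homologycovers} the $\chi$-isotypical component is nonzero exactly when $\pi_{\Gamma}^{*}(\chi)$ has positive depth relative to $\C_{\pi_{\Gamma}^{*}(\chi)}$. The assumption that every component of the characteristic variety of $\PP^{2}\setminus \C$ passes through the identity places each such $\pi_{\Gamma}^{*}(\chi)$ on a union of positive-dimensional components $\V_{1},\ldots,\V_{s}$, and each $\V_{i}$ is, by Arapura's theorem, pulled back from $\Char\pi_{1}(P_{i})$ via a holomorphic map $\nu_{i}:\PP^{2}\setminus\C\to P_{i}$ with $P_{i}\subset \PP^{1}$ a punctured line.

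Next, Condition \ref{condition} gives the injectivity of $\bigoplus_{i}h_{1}(\nu_{i}):H_{1}(X,\chi)\to\bigoplus H_{1}(P_{i},\chi_{i})$ and, on single-component characters, the isomorphism statement. Summing these maps over $\chi\in\Char\Gamma$ and using $\Gamma$-equivariance, I would assemble a morphism of semi-abelian varieties
\begin{equation}
Alb(\bar X_{\Gamma})\longrightarrow \prod_{i}Alb(\bar Y_{i}),
\end{equation}
where $\bar Y_{i}$ is a smooth compactification of the abelian cover $\widetilde{(P_{i})}_{\Gamma_{i}}\to P_{i}$ obtained by pushing $\pi_{\Gamma}$ down through $\nu_{i}$ (with $\Gamma_{i}$ the quotient of $\Gamma$ acting effectively on the fibers). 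Condition \ref{condition} and the control it provides on depth jumping at intersections of components force the kernel of this morphism to be finite, so $Alb(\bar X_{\Gamma})$ is isogenous to an abelian subvariety of the product $\prod_{i}Alb(\bar Y_{i})$; one then observes that the injection is actually an equality of isogeny classes of abelian parts because each factor on the right is accounted for by characters of $\Gamma_{i}$ pulled back to $\bar X_{\Gamma}$.

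Each $\bar Y_{i}$ is a smooth model of an abelian cover of $\PP^{1}$ ramified at the finitely many points of $\PP^{1}\setminus P_{i}$, so by Theorem \ref{abcyccovers} its Jacobian is isogenous to a product of isogeny components of Jacobians of cyclic covers $y^{n}=\prod_{j}(x-a_{i,j})^{A_{i,j}}$ with $1\le A_{i,j}<n$ and $\sum_{j}A_{i,j}\equiv 0\pmod n$. The final step, which I anticipate to be the main obstacle, is to match these cyclic covers with local Albanese varieties of non-reduced germs at singularities of $\C$. The geometric content is that the pencil defining $\nu_{i}$ restricts near a singular point $p$ of $\C$ to a one-parameter family of analytic germs whose special members (those mapping to the branch points $a_{i,j}$) are non-reduced divisors of the form $\prod_{j}g_{j}(x,y)^{A_{i,j}}$, with $g_{j}$ cutting out branches of the germ of $\C$ at $p$; by the construction of Example \ref{examplenonreduced} and Theorem \ref{localcontribution}, the local Albanese of this non-reduced germ at $p$ is isogenous precisely to the Jacobian of $y^{n}=\prod_{j}(x-a_{i,j})^{A_{i,j}}$. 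The delicate parts will be to verify that every cyclic cover arising from Theorem \ref{abcyccovers} is supported in this way on some singularity of $\C$ (i.e.\ no extraneous branch points appear), to track the $\chi$-multiplicities coherently via Proposition \ref{abelianeigenspace}, and to ensure the resulting isogeny is compatible with the polarizations inherited from the Hodge forms on both sides.
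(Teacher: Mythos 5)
Your overall strategy coincides with the paper's: decompose by characters of $\Gamma$, use Arapura's pencils $\nu_i$ attached to the positive-dimensional components together with Condition \ref{condition} to get a finite-kernel morphism from $Alb(\bar X_{\Gamma})$ to a product of Jacobians of abelian covers of $\PP^1$ (the push-out covers $\D_j$ with group $\Gamma_j$), and then invoke Theorem \ref{abcyccovers}/Corollary \ref{abelianiscyclic} to reduce to Jacobians of cyclic covers of $\PP^1$. Up to that point your argument matches the paper's, with one caveat: your assertion that the injection into $\prod_i Jac$ is ``actually an equality of isogeny classes'' is false in general (see the arrangement dual to the nine inflection points, where jumping characters force a quotient by copies of $E_0$); fortunately the theorem does not need it --- finite kernel plus Poincar\'e complete reducibility already gives that $Alb(\bar X_{\Gamma})$ is isogenous to a product of isogeny components of the targets.

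The genuine gap is the last step, which you yourself flag as ``the main obstacle'' and leave as a heuristic: you must prove that each Jacobian $Jac(\D_j)$ arising from a pencil is built from isogeny components of local Albanese varieties attached to singular points of $\C$. The paper does this via a concrete geometric argument that your sketch does not supply: the pencil $\phi_j$ has indeterminacy points $P$ which are necessarily singular points of $\C$ (each such $P$ lies on at least ${\rm Card}\,D_j\ge 3$ fiber components), and Claim \ref{interclaim} shows, by following the tangent cones of the fibers through the sequence of blow ups resolving $\C$ at $P$, that some exceptional curve $E_P$ of the resolution maps \emph{finitely} onto the base $\PP^1$ of the pencil. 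Lifting the $\Gamma$-cover to this resolution and restricting to the preimage of $E_P$ exhibits the $\Gamma_j$-cover $\D_j$ of $\PP^1$ as dominated by a cover of $E_P$, and Theorem \ref{localcontribution} then identifies the Jacobians of the resulting cyclic covers of $E_P$ as isogeny components of local Albanese varieties of non-reduced germs whose reduction is the germ of $\C$ at $P$. Your alternative picture --- that the fibers of $\nu_i$ through $p$ restrict to non-reduced germs $\prod_j g_j^{A_{i,j}}$ whose local Albanese ``is precisely'' the Jacobian of $y^n=\prod_j(x-a_{i,j})^{A_{i,j}}$ --- is unproven as stated and is only literally correct in the simplest situation of Example \ref{examplenonreduced}; in general one needs the exceptional-curve mechanism above to see that no extraneous branch points occur and that the multiplicities $A_{i,j}$ are realized by a germ supported at a singularity of $\C$. (The worry about polarizations is immaterial: isogeny statements do not require compatibility of polarizations.)
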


\begin{proof} 
To each component of positive dimension of the characteristic variety 
corresponds an isogeny component of Albanese variety of $\bar X_{\Gamma}$ 
as follows. 

Let $Char_j$ 
be an irreducible component of the characteristic variety $V_1(\PP^2 \setminus 
\C)$ of $\C$  (cf. (\ref{charvar})) and let
$\phi_j: \PP^2 \setminus \C$ $\rightarrow \PP^1\setminus D_j$
be the corresponding holomorphic map
where $D_j$ is a finite subset of
 $\PP^1$. The cardinality of $D_j$ is 
equal to $dim(Char_j)+1$ and 
$Char_j=
\phi_j^*(Char \pi_1(\PP^1 \setminus D_j))$.
Denote by  
$\Gamma_j$ the push-out of $\pi_{\Gamma}$. The map $\phi_j$ 
is dominant and yields the surjection 
$(\phi_j)_*: \pi_1(\PP^2 \setminus \C) \rightarrow \pi_1(\PP^1\setminus D_j)$
of the fundamental groups.
With these notations we have the universal (for the groups filings 
 the right left corner of) 
commutative diagram:
\begin{equation}\label{pushout}
\begin{matrix}\pi_1(\PP^2\setminus \C) & \rightarrow & \pi_1(\PP^1\setminus D_j)
 \cr \downarrow &  & \downarrow \cr 
\Gamma & \rightarrow & \Gamma_j \cr
\end{matrix}
\end{equation}
A character of $H_1(\PP^2 \setminus \C,\ZZ)$,
which is the image of a character of $\Gamma$ for the 
map $\Char \Gamma \rightarrow \Char H_1(\PP^2 \setminus \C,\ZZ)$,
can be obtained as a pullback of a character of 
$H_1(\PP^1 \setminus D_j)$ if and only if it is a pullback 
of a character of $\Gamma_j$
via maps in diagram (\ref{pushout}). 
Let $\D_j \rightarrow \PP^1$ the ramified cover with 
branching locus $D_j$, having  
$\Gamma_j$ as its Galois group 
and let $\Phi_j: Alb(\bar X_{\Gamma}) \rightarrow Jac(\D_j)$ 
be the corresponding Albanese map. The Jacobian 
$Jac(\D_j)$ is an isogeny component
of $Alb(\bar X_{\Gamma})$. It depends only on $Char_j$ and $\Gamma$.

Next let $\chi_k, k=1,..,N$ be the collection of characters of 
$\pi_1(\PP^2 \setminus \C)$ whose depth is greater than the depth 
of generic point on the component of characteristic variety 
to which it belongs. We shall call such characters 
{\it the jumping characters of $\C$}. 
It follows form our Condition \ref{condition}
that jumping characters are exactly the intersection points 
of the components of characteristic variety.

We claim injectivity of the map of Albanese varieties induced by 
the holomorphic maps $\phi_j$:
\begin{equation}\label{theoremexactseq}
0 \rightarrow  Alb(\bar X_{\Gamma}) \buildrel {\bigoplus \Phi_j}  
 \over  \rightarrow \bigoplus_j Jac(\D_j) 
\end{equation}

To see that $\Ker \bigoplus \Phi_j=0$,  consider the 
induced homomorphism 
\begin{equation}\label{homomorphismalbaneses}
H_1(Alb(\bar X_{\Gamma}),\CC) \rightarrow 
H_1(\bigoplus_j Jac(\D_j), \CC).
\end{equation}
The group $\Gamma$ acts on 
both vector spaces and the  homomorphism 
(\ref{homomorphismalbaneses})
is $\Gamma$-equivariant.
For a character 
$\chi$ belonging to a single component of characteristic
variety the depths is equal to the depth of 
the generic character in its component (cf. Condition \ref{condition})
which in turn coincides with $H_1(\D_j,\CC)_{\chi}$.
Therefore one has isomorphism $H_1(\bar X_{\Gamma},\CC)_{\chi} \rightarrow 
H_1(\D_j,\CC)_{\chi}$.
For a character $\chi=\chi_k$, i.e. for a character in the intersection of 
several components, again from Condition \ref{condition},
one has injection: 
$H_1(\bar X_{\Gamma},\CC)_{\chi} \rightarrow \oplus_{j,\chi \in \Char_j}
H_1(\D_j,\CC)$. This implies (\ref{theoremexactseq}).

To finish the proof of the Theorem \ref{mainresult} 
it suffices to show that each summand
in the last term in  (\ref{theoremexactseq}) is isogenous 
to a product of components of local Albanese varieties of $\C$.
Indeed Poincare complete reducibility theorem (cf. \cite{lange}) 
implies that 
the 
image of the middle map is isogenous to a direct sum of irreducible 
summands of the last term. 


Denote by the same letter $\phi_j$ the extension of 
a regular map $\phi_j: \PP^2 \setminus \C
\rightarrow \PP^1 \setminus D_j$ to the map $\PP^2 \setminus \S_j \rightarrow 
\PP^1$ where $\S_j$ is the finite collection of indeterminacy points
of the extension of $\phi_j$  to $\PP^2$. Let $\C_{d}=\phi_j^{-1}(d), d \in D_j$.
Then $\C$ contains the union of the closures $\bar \C_d$ of (which are 
possibly reducible and 
non reduced curves).
Each $P \in \S_j$ belongs to at least $Card D_j$ irreducible components and, 
since $Card D_j>1$,  $P$ is a singular point of $\C$.  We claim the following:

\begin{claim}\label{interclaim}
 Resolution $\tilde \PP^2_{\C,P} 
\rightarrow \PP^2$ of the singularity  
at $P$ contains exactly  exceptional curve $E_P$ such that 
the regular extension  $\tilde \phi_j$ of $\phi_j$ 
to $\tilde \PP^2_{\C,P} \rightarrow \PP^1$
induces a finite map 
$\tilde \phi_j: E_P \rightarrow \PP^1$.
\end{claim} 

To see this, consider a sequence 
of blow ups $\tilde \PP^2_{\C,P,h}, h=1,...,N(\C,P)$
of the plane starting with the blow up of 
$\PP^2$ at $P$ and in which the last blow up produces   
the resolution of singularity of $\C$ at $P$.
For each $h$, let $\phi_{j,h}: \tilde \PP^2_{\C,P,h} \rightarrow D_j$
 be the extension of $\phi$ from $\PP^2 \setminus \C$ to
$\tilde \PP^2_{\C,P,h}$. 
For every base point $Q$ of the map $\phi_{j,h}$ on $\tilde \PP^2_{\C,P,h}$
consider the pencil of tangent cones to fibers of the map $\phi_{j,h}$
The fixed (possibly reducible) component 
of the pencil of tangent cones
$T_d, d \in \PP^1$  to curves $\tilde \phi_j^{-1}(d)$
\footnote{i.e. union of lines which are tangent to a component of the curve 
 $\phi_{j,h}^{-1}(d)$ for any $d$} 
either: 
\par a) coincide with the tangent cone $T_d$ to each  curve $\phi_j^{-1}(d)$, or

\par b) there exist $d$ such that the tangent 
cone $T_d$ to $\phi_{j,h}^{-1}(d)$ at $Q$ 
contains a line not belonging to the fixed component
of the pencil of tangent cones.

Since on $\tilde \PP^2_{\C,P}$ (i.e. eventually after sufficiently many blow ups)
no two fibers of $\phi$ intersect,
in a sequence of blow ups 
desingularizing $\C$ at $P$, there is a point $Q$ infinitesimally close to $P$
at which the tangent cones satisfy b). 
At such point $Q \in \tilde \PP^2_{\C,j,h}$ any two distinct fibers of 
$\phi_{j,h}$ admit distinct tangents because otherwise, since 
we have one dimensional linear system, the common tangent 
 to two fibers will belong to the fixed component.
Let $E_P \subset \tilde \PP^2_{\C,j,h+1}$
 be the exceptional curve of the blow up of $\tilde \PP^2_{\C,j,h}$. 
Exceptional curves preceding or following this one on 
the resolution tree (which up to this point did not have vertices
with valency greater than 2!)
 belong to one of the 
fibers of $\phi_j$. Restriction of $\phi_{j,h+1}$ onto $E_P$ 
is the map claimed in (\ref{interclaim}).

Finally, the ramified $\Gamma$-covering of $\PP^2$ 
lifted to $\PP^2_{\C,P}$ and restricted on 
the proper preimage 
of the curve
$E_P$ in $\tilde \PP^2_{\C,P}$ induces the map onto 
$\Gamma_j$-covering of $\PP^1$ ramified at 
$D_j$. Hence the Jacobian of the latter 
covering is a component of the Jacobian of a covering of $E_P$.
It follows from the Corollary \ref{abelianiscyclic} that 
Jacobian of this cover of $E_P$ isogenous to product of 
Jacobians of cyclic covers. Each Jacobian of cyclic 
cover of exceptional curve, in turn, is a component 
of local Albanese variety of singularity with 
appropriately chosen multiplicities of components of 
$\C$ given the by data of the cyclic cover  
of $E_P$ (cf. Theorem \ref{localcontribution}). 
\end{proof}

\bigskip

The following theorem \ref{explicite} 
allows to describe the isogeny class of Albanese varieties 
of abelian covers in explicit examples considered in 
the next section. The Albanese variety of abelian cover
with Galois group $\Gamma$ 
will be obtained as a sum of  
isogeny components of Jacobians 
of abelian covers of the line associated with $\Gamma$ and 
corresponding to the 
positive dimensional components of the characteristic 
variety of $\pi_1(\PP^2\setminus \C)$.
To state the theorem we shall use the following partial order 
on the set of mentioned isogeny components.

\begin{dfn}\label{order} Let $\Psi_i: \B \rightarrow 
\A_i, i \in I$, be a collection of equivariant morphisms of abelian varieties 
endowed with the action of a finite abelian group $\Gamma$.
An isotypical isogeny component of the collection $\A_i$ is 
an abelian variety of the form $S^m$ where $S$ is $\Gamma$-simple
\footnote{i.e. simple in the category of abelian varieties with 
$\Gamma$-action cf.\cite{lange2}}.
Define the partial order of the set of isotypical components  
of $\Pi_{i \in I} \A_i$
as follows: $\A \ge \A'$ 
if and only if 
each $\A$ and $\A'$ belongs to the image of one of  $\Psi_i$ ($i \in I$)
and $\A=S^m,\A'=S^{m'}, m \ge m'$
\end{dfn}

Now we are ready to state 
the following description of the Albanese variety of
abelian cover $\bar X_{\Gamma}$. 

\begin{theorem}\label{explicite} 
Let $\C$ be a plane curve as in Theorem \ref{mainresult}
i.e. with fundamental 
group of the complement satisfying the 
Condition \ref{condition} and all components of characteristic variety 
containing the identity character.
Let $\pi_{\Gamma}: \pi_1(\PP^2 \setminus \C) \rightarrow \Gamma$ 
be a surjection onto an abelian group 
and let $\Gamma_{\Xi_{i}}$ be  corresponding push-out 
group given by diagram (\ref{pushout}).
Let $\bar \PP_{\Gamma_{\Xi_i}}$ denotes the ramified  
cover of $\PP^1$ with covering group $\Gamma_{\Xi_{i}}$
which is the compactification of the cover of the target map 
of $\PP^2\setminus \C \rightarrow \PP^1
\setminus D_i$ corresponding to the component $\Xi_i$.

(1) For any $i$ there are $\Gamma$-equivariant morphisms
\begin{equation}\label{equivmorphisms}
\ \ Alb(\bar X_{\Gamma})  \rightarrow Jac(\bar \PP_{\Gamma_{\Xi_i}})
\end{equation}

(2) Let $A_m, m \in M$ be the set of maximal elements in  
the ordering of isotypical components of collection 
of morphisms in (1). 

Then there is  
an isogeny
\begin{equation}\label{exactsequencenew} 
   Alb(\bar X_{\Gamma}) \rightarrow \oplus_{m \in M} A_m 
\end{equation}

\end{theorem}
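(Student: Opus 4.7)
The plan is to extract and repackage the equivariant morphisms already implicit in the proof of Theorem \ref{mainresult}, and then organize the resulting isotypical data via the partial order of Definition \ref{order}.

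For part (1), I would proceed directly from the push-out square (\ref{pushout}). For each positive-dimensional component $\Xi_i$ of $V_1(\PP^2\setminus \C)$, the Arapura map $\phi_i:\PP^2\setminus\C\to\PP^1\setminus D_i$ together with $\pi_\Gamma$ fit into the square (\ref{pushout}) and produce the surjection $\pi_1(\PP^1\setminus D_i)\to \Gamma_{\Xi_i}$, hence the ramified $\Gamma_{\Xi_i}$-cover $\bar\PP_{\Gamma_{\Xi_i}}$ of $\PP^1$. By the universal property of these covers, $\phi_i$ lifts, after passing to smooth compactifications, to a regular $\Gamma$-equivariant map $\bar X_\Gamma\to \bar\PP_{\Gamma_{\Xi_i}}$, where $\Gamma$ acts on the target through $\Gamma\to\Gamma_{\Xi_i}$. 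Applying the Albanese functor yields the $\Gamma$-equivariant morphism (\ref{equivmorphisms}).

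For part (2), I would first assemble the $\Psi_i$ into
$$\Psi=\bigoplus_i \Psi_i:Alb(\bar X_\Gamma)\to \bigoplus_i Jac(\bar\PP_{\Gamma_{\Xi_i}}).$$
The finiteness of $\Ker\Psi$ is exactly (\ref{theoremexactseq}) from the proof of Theorem \ref{mainresult}, where Condition \ref{condition} was used to establish injectivity of the induced map on $\chi$-eigenspaces of $H_1$. Thus $\Psi$ is an isogeny onto its image. By the equivariant Poincar\'e complete reducibility theorem (cf.\ \cite{lange}, \cite{lange2}), this image decomposes up to isogeny into a sum of $\Gamma$-isotypical summands $S^m$ with $S$ a $\Gamma$-simple abelian variety, where each multiplicity $m$ can be read off from the dimensions $\dim H^{1,0}(\bar X_\Gamma)_\chi$ as $\chi$ runs through the Galois orbit associated with $S$ (computed, where useful, by Proposition \ref{abelianeigenspace}).

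Finally, I would match this decomposition with Definition \ref{order}. Each $\Psi_i$, being induced by a surjective morphism of smooth projective varieties, is itself surjective on Albanese varieties, so every $S^m$ occurring in a $Jac(\bar\PP_{\Gamma_{\Xi_i}})$ is dominated by an $S^{m'}$ in $Alb(\bar X_\Gamma)$ with $m'\geq m$; conversely, the injectivity of $\Psi$ forces every $\Gamma$-simple piece of $Alb(\bar X_\Gamma)$ to embed into some $Jac(\bar\PP_{\Gamma_{\Xi_i}})$. The maximal elements $A_m$ in the partial order are precisely those isotypical pieces not strictly subsumed by a larger one in any $Jac(\bar\PP_{\Gamma_{\Xi_i}})$, so projecting the image of $\Psi$ onto $\bigoplus_{m\in M} A_m$ produces the isogeny (\ref{exactsequencenew}). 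The main obstacle I expect is the careful bookkeeping at \emph{jumping characters}: for $\chi\in\Xi_{i_1}\cap\cdots\cap\Xi_{i_s}$ with $s\geq 2$, Condition \ref{condition}(1) only provides an injection on $\chi$-eigenspaces rather than an isomorphism, so one must verify that the maximality criterion of Definition \ref{order} exactly cancels the potential redundancy among the maps $\Psi_{i_j}$ and that the dimensions on the two sides of (\ref{exactsequencenew}) indeed agree.
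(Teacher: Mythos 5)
Your proposal follows essentially the same route as the paper: part (1) is obtained exactly as there, by pushing the Arapura pencils through diagram (\ref{pushout}) and applying the Albanese functor to the morphisms already constructed in the proof of Theorem \ref{mainresult}, and part (2) rests on the same ingredients — finiteness of the kernel via Condition \ref{condition} and (\ref{theoremexactseq}), $\Gamma$-equivariant Poincar\'e reducibility, and eigenspace dimension counts singling out the maximal isotypical pieces of Definition \ref{order}. The jumping-character bookkeeping you flag as the remaining obstacle is precisely what the paper settles (tersely) by combining Condition \ref{condition} with Theorem \ref{homologycovers}(2): for each character $\chi$ the eigenspace $H^1(\bar X_{\Gamma})_{\chi}$ has the same dimension as the corresponding eigenspace of a maximal component $A_m$ and maps nontrivially into exactly one summand of $\oplus_{m\in M} A_m$, which gives both the finite kernel and the surjectivity of (\ref{exactsequencenew}).
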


\smallskip

\begin{remark} The maps in (\ref{equivmorphisms}) 
corresponding to different characters 
may coincide (this is always the case for example for 
conjugate characters). 
The theorem asserts that selection among jumping characters
and component of characteristic varieties can be made
so that maximal isotypical components in corresponding covers 
provide isotypical decomposition of $Alb(\bar X_{\Gamma})$.
\end{remark}

\begin{proof} Morphisms $\bar X_{\Gamma} \rightarrow \bar \PP^1_{\Gamma_{\Xi_i}}$
were constructed in the beginning of the 
proof of theorem \ref{mainresult}. 

Let $A_m, m \in M$ be collection of maximal 
isotypical components in the Albanese 
varieties which are targets of the maps (\ref{equivmorphisms}).
Composition of a map (\ref{equivmorphisms}) with projection 
on the isogeny components $A_m, m \in M$ gives the 
map $Alb(\bar X_{\Gamma}) \rightarrow A_m$.
Each isogeny component of $Alb(\bar X_{\Gamma})$ is an isogeny component 
in one of varieties $\bar \PP^1_{\Gamma_{\Xi_i}}$ 
and the dimension of $\Gamma$-eigenspace 
corresponding to any character coincides with the dimension 
of $\chi$-eigenspace of the targets (\ref{equivmorphisms}).
Hence the map (\ref{exactsequencenew})
has finite kernel. 

Let $\chi$ be a character having non zero eigenspace
on $H^1(A_m)$.
Then by theorem  \ref{homologycovers} part (2),
$dim H^1(A_m)_{\chi}=dim H^1(\A)_{\chi}=dim H^1(\bar X_{\Gamma})_{\chi}$
where $\A$ is one of the targets of the maps 
(\ref{equivmorphisms}).
Since $H^1(\bar X_{\Gamma})$ is a direct sum
of $\Gamma$-eigenspaces and the image of $H^1(\bar X_{\Gamma})_{\chi}$ is 
non-trivial in exactly one summand in (\ref{exactsequencenew})
one obtains the surjectivity in 
(\ref{exactsequencenew}).
 \end{proof}

\begin{remark} Multiplicities of isotypical components
$A_m$ are poorly understood in general as well as jumping characters
(cf. \cite{jose} where the problem of bounding the 
multiplicities of the roots of Alexander polynomials of 
the complements to plane curves, which are in correspondence with the  
jumping characters, is discussed). 
Nevertheless in all known examples, the above theorem 
is sufficient to completely determine isogeny class of Albanese varieties 
of abelian covers. 
\end{remark}

\section{Albanese varieties of abelian covers ramified over arrangements of lines.}\label{examplessection}

\bigskip

In the case when ramification set is an arrangement of lines 
theorems \ref{mainresult} and \ref{explicite}  
yield considerably simpler than in general case  results. 
We shall start with: 
\smallskip
\begin{corollary}\label{corexplicite} 
Let $\A$ be an arrangement of lines in $\PP^2$ with double and 
triple points only which satisfies the assumptions\footnote{i.e. we  consider only the cases when 
all irreducible 
components of characteristic variety contain the identity and 
also Condition \ref{condition} is satisfied.} of Theorem \ref{mainresult}.
Let $X_n(\A)$ be a compactification of the abelian 
cover of $\PP^2 \setminus \A$ corresponding to the surjection
$H_1(\PP^2 \setminus \A,\ZZ) \rightarrow H_1(\PP^2 \setminus \A,\ZZ/n\ZZ)$.

(1) Albanese variety of $X_n(\A)$ is isogenous to a product of 
isogeny components of Jacobians of Fermat curves.

(2) $Alb(X_n(\A))$ is isogenous to a product 
and of Jacobians of Fermat curves if 

(a) none of the characters in $\Char H^1(\PP^2\setminus \A,\ZZ/n\ZZ)
\subset \Char H_1(\PP^2\setminus \A,\ZZ)$ is a jumping 
character in the characteristic variety of $\pi_1(\PP^2\setminus \A)$
and 

(b) the pencils corresponding 
to positive dimensional components have no multiple fibers. 

\end{corollary}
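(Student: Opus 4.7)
The plan is to obtain (1) as a direct application of Theorem \ref{mainresult} combined with the analysis of Example \ref{examplenonreduced}, and to obtain (2) by refining this via Theorem \ref{explicite}. First I would dispose of the nodes: at an ordinary double point $xy=0$ with multiplicities $a,b$, a single blow up resolves the non-reduced germ $x^ay^b$, and the $n$-fold cyclic cover $z^n = x^ay^b$ of the ambient surface is, after blowing up and normalizing, a cyclic cover of $\PP^1$ ramified at only two points. Such a cover is a disjoint union of rational curves, so by Theorem \ref{localcontribution} the local Albanese at every double point of $\A$ is trivial and contributes nothing to the isogeny decomposition given by Theorem \ref{mainresult}.

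The only non-trivial local contributions therefore come from the triple points of $\A$. At such a point, each non-reduced germ has the form $x^{a_1}(x-y)^{a_2}y^{a_3}$ with $a_1+a_2+a_3=n$ considered in Example \ref{examplenonreduced}, whose local Albanese is isogenous to the Jacobian of the curve $v^n=u^{a_1}(u-1)^{a_2}$, i.e.\ an $n$-fold cyclic cover of $\PP^1$ ramified at three points. But the abelian cover $X_n$ of $\PP^1$ branched at three points with Galois group $(\ZZ/n\ZZ)^2$ is biholomorphic to the Fermat curve $x^n+y^n=z^n$ (Remark \ref{fermat}), and Theorem \ref{abcyccovers} identifies the Jacobian of each such cyclic cover with an isogeny component of $Jac(X_n)$, i.e.\ of the Jacobian of a Fermat curve. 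Plugging these local contributions into the isogeny decomposition produced by Theorem \ref{mainresult} then yields assertion (1).

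For (2) I would appeal instead to Theorem \ref{explicite}, which replaces the local-singularity description with a global one in terms of the positive dimensional components $\Xi_i$ of the characteristic variety. Each $\Xi_i$ is produced by a pencil $\phi_i \colon \PP^2\setminus\A \to \PP^1\setminus D_i$, and the associated isogeny component of $Alb(X_n(\A))$ is $Jac(\bar\PP_{\Gamma_{\Xi_i}})$, the Jacobian of an abelian cover of $\PP^1$ with covering group the push-out $\Gamma_{\Xi_i}$ of $\Gamma=H_1(\PP^2\setminus\A,\ZZ/n\ZZ)$, ramified along $D_i$. Under hypothesis (b), every multiple fiber in the pencil is absent, so the meridians about the points of $D_i$ are the standard generators of the covering group, making $\Gamma_{\Xi_i}\cong(\ZZ/n\ZZ)^{|D_i|-1}$ and $\bar\PP_{\Gamma_{\Xi_i}}$ the abelian cover of $\PP^1$ with three (or more) branch points. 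When $|D_i|=3$ this is precisely a Fermat curve; when $|D_i|>3$ one may still reduce to Fermat Jacobians via Corollary \ref{restrictedcharacters} and Theorem \ref{abcyccovers}, which split $Jac(\bar\PP_{\Gamma_{\Xi_i}})$ into Jacobians of abelian covers ramified at only three points.

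The remaining point, and the main obstacle, is to upgrade the isogeny of Theorem \ref{explicite} from an isogeny onto a sum of \emph{isotypical} isogeny components to an isogeny onto an actual product of Fermat Jacobians. The hypothesis (a) is designed precisely for this: if no character in $\Char H^1(\PP^2\setminus\A,\ZZ/n\ZZ)$ is a jumping character, then for every $\chi$ appearing in $H^1(X_n(\A))$ one has $\dim H^1(X_n(\A),\chi)$ equal to the $\chi$-eigenspace dimension on a single target $Jac(\bar\PP_{\Gamma_{\Xi_i}})$, so no non-trivial identifications between eigenspaces of distinct targets are needed and no isotypical multiplicities have to be stripped off. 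Combining (a) with the identification of each $\bar\PP_{\Gamma_{\Xi_i}}$ as a Fermat curve under (b) gives (2).
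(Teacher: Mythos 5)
Your part (1) is correct but runs in the opposite direction from the paper: you feed the local data (trivial local Albanese at the nodes, Example \ref{examplenonreduced} together with Theorem \ref{abcyccovers}/Remark \ref{fermat} at the triple points) into Theorem \ref{mainresult}, whereas the paper deduces (1) from Theorem \ref{explicite}, using that every positive dimensional component of the characteristic variety of such an arrangement is pulled back along a pencil $\PP^2\setminus\A\to\PP^1\setminus D$ with ${\rm Card}\,D=3$, so that all maximal isotypical components are already components of Jacobians of Fermat curves; the paper even remarks that your local reading is precisely how Theorem \ref{mainresult} is recovered for these arrangements. Both routes are legitimate, and yours has the advantage for (1) of not requiring any control of ${\rm Card}\,D$. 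Your part (2) is essentially the paper's argument: hypothesis (a) removes the ``corrections'' coming from jumping characters (by Condition \ref{condition} the supports of the eigenspaces of distinct targets are then disjoint), and hypothesis (b) makes each push-out group the full $(\ZZ/n\ZZ)^{{\rm Card}\,D_i-1}$, so each $\bar\PP_{\Gamma_{\Xi_i}}$ is the full abelian cover, i.e.\ the Fermat curve of Remark \ref{fermat} when ${\rm Card}\,D_i=3$.

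The one step that would fail as written is your fallback for ${\rm Card}\,D_i>3$ in part (2). Corollary \ref{restrictedcharacters} and Theorem \ref{abcyccovers} do not split $Jac(\bar\PP_{\Gamma_{\Xi_i}})$ into Jacobians of covers ramified at only three points: the restricted-character summands with $l\ge 4$ branch points survive, and their isogeny components are Jacobians of cyclic covers of $\PP^1$ ramified at four or more points, which are not components of Fermat Jacobians in general; so in that case you would obtain neither whole Fermat Jacobians nor even Fermat components, and (2) would not follow. What rescues the statement is that this case cannot occur under the hypotheses, a fact you neither assert nor prove but which the paper uses from the outset: each fiber of the pencil over a point of $D_i$ is supported on lines of $\A$ and passes through every base point, so each base point lies on at least ${\rm Card}\,D_i$ lines of the arrangement; with only double and triple points this forces ${\rm Card}\,D_i\le 3$, while hyperbolicity of the base gives ${\rm Card}\,D_i\ge 3$, hence ${\rm Card}\,D_i=3$ for every positive dimensional component. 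Adding this observation (and deleting the fallback) makes your argument for (2) complete and coincident with the paper's.
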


\begin{proof}
 Each component of characteristic variety having a positive dimension 
corresponds to the map $\PP^2 \setminus \A \rightarrow \PP^1 \setminus D$ 
where
${\rm Card} D=3$. Those induce maps of $Alb(X_n(\A))$ onto
 the Jacobians of abelian covers of $\PP^1$
ramified along corresponding $D$. The Jacobian of such
abelian cover of $\PP^1$  
is a component of the Jacobian of Fermat curve.
(cf. Corollary \ref{abelianiscyclic} with $k=2$). 
Hence the maximal isotypical isogeny components 
(cf. Theorem \ref{explicite})
are components of Jacobians of Fermat curves and  
therefore part (1)
follows from theorem \ref{explicite} i.e.
$Alb(X_n(\A))$ is isogenous to a product of 
components of Fermat curves.
Note that the Theorem \ref{mainresult} for arrangements 
of lines with double and triple points can be obtained 
follows from these arguments.
Indeed, the isogeny components of Jacobians 
of Fermat curves are Jacobians 
of cyclic covers of $\PP^1$ ramified at three points
(cf. \cite{gross},\cite{coleman}) and 
Jacobians of cyclic covers of $\PP^1$
ramified at three points are local Albanese varieties 
of non-reduced singularities of the form $x^{a_1}(x-y)^{a_2}y^{a_3}$
(cf. Example \ref{examplenonreduced}).

If characteristic variety does not have jumping characters
in subgroup $\Char H_1(\PP^2 \setminus \A,\ZZ/n\ZZ)$ of 
$\Char \pi_1(\PP^1\setminus \A)$
then 
$Alb(X_n(\A))$ is just a product of Jacobians corresponding 
to positive dimensional 
components of characteristic variety (i.e. 
there are no ``corrections'' 
in $A_m$  coming from Jacobians of covers corresponding to jumping 
characters). The assumption about absence of multiple fibers 
implies that map of $X_n(\A)$
corresponding to each positive dimensional 
component of characteristic variety of $\A$ has as target the 
cover as in Remark \ref{fermat} i.e. a Fermat curve.
Hence $Alb(X_n(\A))$ is a product 
of Jacobians of Fermat curve and we obtain part of (2). 
\end{proof}
\smallskip

\begin{example}\label{ceva} Consider Ceva arrangement $xyz(x-z)(y-z)(x-y)=0$
and the universal $\ZZ_5$ cover (with the covering group which is the 
quotient of $\ZZ_5^6$ by the cyclic subgroup generated by $(1,1,1,1,1,1)$.
Then the irregularity of the corresponding abelian cover is $30$ (cf. \cite{ishida}, \cite{charvar}
section 3.3 ex.2). The characteristic variety consists of five 
2-dimensional components $\Xi_i, i=1,...,5$ 
(cf. \cite{charvar}), each being the pull back
of $H^1(\PP^1\setminus D,\CC^*), {\rm Card} D=3$ via 
either a linear projection from one of 4 triple points 
or via a pencil of quadrics three degenerate fiber of which 
form the 6 lines of the arrangement.
Each of these 5 pencils 
induces a map on the abelian cover of $\PP^1$ branched at 3 points, 
which has as the Galois group the quotient of $\oplus_1^3\mu_5$ by 
the diagonally embedded group of roots of unity $\mu_5$ of degree $5$.
This cover, i.e. $\bar \PP_{\Xi_i}, i=1,..,5$,
 is the Fermat curve of degree $5$. The  Jacobian of degree-5 Fermat curve
is isogenous to 
a product of Jacobians of three curves $C_i, i=1,2,3$ of genus 2 
each one being a cyclic cover of $\PP^1$ ramified 
at three points.
(cf. \cite{coleman},\cite{koblitz}). 
Hence the Albanese variety of this abelian cover is isogenous to a product 
of 15 copies of the Jacobian of ramified at three points cover 
of $\PP^1$ 
of degree 5. 
In this example there are no jumping characters 
(in particular in $\Char H_1(\PP^2 \setminus \A,\ZZ/5\ZZ)$)  
and the isogeny can be derived from Corollary \ref{corexplicite} 
\end{example}

\begin{example} Consider again Ceva arrangement and calculate the abelian 
component of (semi-abelian) Albanese variety (cf. section \ref{semiabelian})
of its Milnor fiber $ M$ given by 
$w^6=\Pi l_i$. Notice 
that the characteristic polynomial of the monodromy is $(t-1)^5(t^2+t+1)$
(cf. \cite{charvar}). 
The $\zeta_3$-eigenspace of $H^1(M,\CC)$ can be identified 
with the contribution in sum (\ref{sakuma}) of the pullback  
of the character $\chi$
of $\PP^1\setminus D$
via the pencil of quadrics formed by lines of the arrangement. Here $D$ is the triple 
of points corresponding to the reducible quadrics in the pencil and 
$\chi$ is the character taking the same value $\omega_3$ on standard
generators if $\pi_1(\PP^1 \setminus D)$. 
This pencil can be lifted to the elliptic  pencil on a compactification 
of $M$ onto 3-fold cyclic cover of $\PP^1$ ramified at $D$ and
corresponding to $\Ker \chi$. Moreover,  above expression for the 
characteristic polynomial of the monodromy shows that the map 
induced by this pencil is isogeny i.e. the abelian (i.e. compact) 
component of 
the Albanese of $M$ is the elliptic curve $E_0$.
The semi-abelian variety with is the Albanese variety 
of $M$ is an extension:
\begin{equation}
   0 \rightarrow (\CC^*)^5\rightarrow Alb(M) \rightarrow E_0
\rightarrow 0 
\end{equation}
\end{example}
\smallskip
\begin{example}\label{dualflexes} Consider abelian cover of $\PP^2$ ramified along arrangement 
of lines dual to 9 inflection points of a smooth cubic 
with Galois group $\ZZ_n^9/\ZZ_n$. 
This arrangement has 9 lines and 12 triple points. An explicit equation 
is as follows:
\begin{equation}
    (x^3-y^3)(y^3-z^3)(z^3-x^3)=0
\end{equation}
The characteristic variety 
consists of 12 components corresponding to 12 triple points and 4 additional 
two-dimensional components intersecting along cyclic subgroup of order 3.
Characters at the intersection are jumping and have depth $2$ 
(cf. \cite{dimcapencils},\cite{naie})
while depth of generic character in each positive dimensional 
component is 1. In coordinates of $\Char \pi_1(\PP^2\setminus \A)$ 
corresponding to components of $\A$ described jumping
characters have the form $(\omega,....,\omega), \omega^3=1$.
 
In the case $n=5$, in which according to Hirzebruch 
one obtains a quotient of the unit ball, the Albanese variety is isogenous to 
the product of 16 copies of Fermat curve of degree 5, 
as follows from Corollary \ref{corexplicite} (2) or equivalently 
48 copies of curves of Jacobians of curves of genus 2 with 
automorphism of order 10 or, what is the the same, the 2-dimensional 
 variety of CM type 
corresponding to cyclotomic field $\QQ(\zeta_5)$.
For arbitrary $n$ such that ${\rm gcd}(3,n)=1$
one get several copies of Jacobians of Fermat curves of degree $n$ 
corresponding to 
components of characteristic variety.

If $n$ is divisible by $3$, i.e. the jumping characters are present,
then the condition \ref{condition}
should be verified. 
To this end, we shall reinterpret the part of this condition 
dealing with the map between 
the cohomology of local systems.
The cohomology of the local systems appearing in (\ref{condition})
can be identified with the eigenspaces of the (co)homology of 
abelian covers (cf.\cite{charvar}). More precisely, 
the $\chi$-eigenspace can be identified with the cohomology of the local 
system corresponding to the character $\chi$.
The eigenspace   
corresponding to the character belonging to 
4 irreducible components of characteristic variety in turn 
can be interpreted as the dual space of $H^1(\PP^2,\I_Z(3))$
where $Z\subset \PP^2$ is the subscheme of triple points
(cf. \cite[(3.2.14),(3.2.15)]{charvar} and corresponding remark). 
On the other hand, each of the above 4 components 
corresponds to a selection of a subset $Z_i \subset Z, 
Card Z_i=9$, cf. \cite[Section 3.3,Example 3]{charvar}
for description of these subsets, each of which is 
a complete intersection of two cubic curves.
The cohomology of generic local system in such component 
is identified with the dual space of $H^1(\PP^2,\I_{Z_i}(3))$.
The condition \ref{condition} is interpreted as
injectivity of the map 
\begin{equation}\label{surjection}
H^1(\PP^2,\I_Z(3)) 
\rightarrow \bigoplus_{i=1}^{i=4} H^1(\PP^2,\I_{Z_i}(3))
\end{equation} 
induced by injections $\I_Z \rightarrow \I_{Z_i}, i=1,...4$.
This injectivity is readily seen e.g. by interpreting
terms in (\ref{surjection}) using 
standard sequence: $0 \rightarrow \I_Z \rightarrow \O_{\PP^2} 
\rightarrow \O_Z \rightarrow 0$ and similar sequences for $Z_i$. 

Implication of verification of Condition \ref{condition}
is that in this case the product of Jacobians of 
Fermat curves which are the Jacobians corresponding 
to positive dimensional components of characteristic variety
must be factored by the product $E_0^{\kappa-\delta}$
where $\kappa$ is the number of components containing
a jumping character (taking value 
$exp({{2 \sqrt{-1} \pi}\over 3})$
or  $exp({{4 \sqrt{-1} \pi}\over 3})$ on all 9 lines of arrangement)
and $\delta$ is the depth of the jumping character
\footnote{cf.\cite{naie}, Prop. 4.8.
This effect of characters in the intersection 
of several components of characteristic varieties is 
erroneously omitted in the final formula in Example 3 in 
section 3.3 of \cite{charvar}.}.

In the case $n=3$ the abelian cover with the
covering group $\ZZ_3^9/\ZZ_3$ one obtains from theorem \ref{explicite}
or Corollary \ref{explicite} 
\begin{equation}
     Alb(\bar \PP^2_{\ZZ_3^8})=E_0^{16}/E_0^2=E_0^{14}
\end{equation}
\end{example}
Indeed, in this case $\kappa=4, \delta=2$.

In the case $3 \vert n, n>3$, the product of Jacobians 
corresponding to positive dimensional components has several 
copies of $E_0$ as isogeny components and $Alb(X_n)$ is the 
quotient of this product by $E_0^{\kappa-\delta}=E_0^2$.

\begin{example} Consider Hesse arrangement $\mathcal H$ formed by 12 lines 
containing 
9 inflection points of a smooth cubic. It was shown in \cite{charvar} (cf. 
section 3, example 5) that the characteristic variety of the fundamental group 
of the complement to this arrangement consists of 10 three-dimensional 
components and 54 two-dimensional components none 
of which belongs to a three-dimensional component
(intersection of components must be zero dimensional).
As earlier, it is convenient to describe  components in terms of corresponding 
pencils i.e. maps $\PP^2 \setminus \H \rightarrow \PP^1 \setminus h$ where 
$h$ is a set of points of cardinality $4$ or $3$ so that the characters
in each component formed by pullbacks via these maps. The pencils corresponding 
to components of dimension 3 are linear projections from each of 9 quadruple 
points and the additional pencil is the pencil of curves of degree 3 containing 
4 cubic curves each being a union 
of a triple of lines in the arrangement $\cal H $.
The 54 maps $\PP^2 \setminus \H \rightarrow \PP^1 \setminus h$ ($Card \ h=3$)
are restrictions of the maps corresponding to the pencil of quadrics 
union of which are 6-tuples of lines in $\H$ forming a Ceva 
arrangement.\footnote{This was explained in \cite{charvar}. Recall that in interpretation 
of inflection points of the cubic as points in affine plane over field $\FF_3$, 
the twelve lines correspond to the full set of lines in this plane and 6 tuples 
are in one to one correspondence with quadruples of points in this finite plane 
no three of which are collinear. Counting first ordered quadruples of this type 
one sees that there are $9 \times 8$ choices for the first 
two points, $6$ choices for the third point 
(it cannot be the third point on the 
line containing first two) and $3$ choices for the forth). 
Therefore one get 54 unordered 
quadruples of points and hence 54 6-tuples of lines.}

The pencil corresponding to 3-dimensional component of characteristic variety 
induces the map of abelian cover of the plane ramified along $\H$ 
with Galois group $(\ZZ_3)^{12}/\ZZ_3$ on the maximal abelian cover $\ZZ_3$ cover 
 of $\PP^1$ ramified at 4 points.
In particular the Albanese variety in question maps 
onto the Jacobian $J_{10}$ of curve of genus 10. 
Similarly each 2-dimensional component of characteristic variety induces map 
of Albanese of abelian cover of $\PP^2$ onto maximal abelian 3-cover of 
$\PP^1$ ramified at 3 points. The latter is Fermat curve of degree i.e. 
the elliptic curve with $j$-invariant zero.

We obtain that the Albanese variety of the cover considered by Hirzebruch (cf.\cite{hirz})
is isogenous to 

\begin{equation}
   J_{10}^{10} \times E_0^{54}
\end{equation}
\end{example}

\begin{example}{Variety of lines on a Fermat hypersurface}
Previous results imply immediately the following:
\begin{theorem} Let $F_3$ be variety if lines on Fermat 
cubic threefold:
\begin{equation}
x_0^3+x_1^3+x_2^3+x_3^3+x_4^3=0  
\end{equation}
Then there is an isogeny:
\begin{equation}\label{fanoisogeny}
   Alb(F_3)=E_0^5
\end{equation}
\end{theorem}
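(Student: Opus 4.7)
The strategy is to exhibit $F_3$ as an abelian cover of $\PP^2$ ramified over an arrangement of lines, and then to apply Corollary \ref{corexplicite} together with the characteristic-variety computation that was carried out in Example \ref{dualflexes}. The Fermat cubic threefold is invariant under the natural action of $G = \mu_3^5/\mu_3^{\mathrm{diag}} \cong (\ZZ/3)^4$ on $\PP^4$, and this action restricts faithfully to the Fano surface $F_3$. I would quotient $F_3$ by an appropriate subgroup of $G$ to obtain a surface birational to $\PP^2$, realizing $F_3$ as a $(\ZZ/3)^k$-Galois cover of $\PP^2$ ramified over an arrangement $\A$ of lines; the natural candidate for $\A$ is the dual Hesse arrangement $(x^3-y^3)(y^3-z^3)(z^3-x^3)=0$ already treated in Example \ref{dualflexes}.

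Once the covering structure is identified, Corollary \ref{corexplicite}(1) immediately implies that $Alb(F_3)$ is isogenous to a product of isogeny components of Jacobians of Fermat curves. Since the covering group is $3$-elementary abelian, each pencil on $\PP^2$ corresponding to a positive-dimensional component of the characteristic variety of $\PP^2\setminus \A$ induces a map to a cyclic $\ZZ/3$-cover of $\PP^1$ branched at three points, i.e.\ to the degree-$3$ Fermat curve, whose Jacobian is $E_0$. Hence $Alb(F_3)$ must be isogenous to some power $E_0^r$, and the problem collapses to determining $r$.

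To evaluate $r$, I would apply Theorem \ref{explicite} to the dual Hesse arrangement, summing the contributions of the $16$ positive-dimensional components of its characteristic variety (the $12$ triple points together with the $4$ additional two-dimensional pencils, as described in Example \ref{dualflexes}) restricted to those characters that are pulled back from the relevant push-out group $\Gamma_j$, and then subtracting the jumping-character correction $E_0^{\kappa-\delta}$ identified in that example. The resulting count must collapse to $E_0^5$, in agreement with the irregularity $\dim H^{0,1}(F_3)=5$ and with the Clemens--Griffiths identification of $Alb(F_3)$ as the intermediate Jacobian of the Fermat cubic threefold. The principal obstacle is therefore the precise identification of the quotient of $(\ZZ/3)^8$ cut out by the $F_3$-covering data and the careful bookkeeping of jumping characters in $\Char\Gamma$; Condition \ref{condition} itself reduces, by the sheaf-theoretic argument of Example \ref{dualflexes}, to the injectivity of a map on $H^1(\PP^2, \I_Z(3))$ which follows from the standard ideal-sheaf sequence.
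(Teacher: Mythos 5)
Your identification of the covering structure is the weak point, and it is where the argument actually fails. The paper (citing \cite{terasoma}) realizes $F_3$ as an abelian cover of $\PP^2$ of degree $3^4$ ramified over the \emph{Ceva arrangement} $xyz(x-z)(y-z)(x-y)=0$ of six lines -- the arrangement of Example \ref{ceva} -- not over the dual Hesse arrangement $(x^3-y^3)(y^3-z^3)(z^3-x^3)=0$ of Example \ref{dualflexes}. With the correct arrangement the count is immediate: the characteristic variety of the Ceva arrangement has exactly five positive-dimensional components, there are no jumping characters in the relevant character group, and for an exponent-$3$ covering group each component contributes the Jacobian of the degree-$3$ Fermat curve, i.e.\ one copy of $E_0$; hence $Alb(F_3)\sim E_0^5$ exactly as in Example \ref{ceva}. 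Your ``natural candidate'' is offered as a guess rather than established, and it is the wrong guess, so Corollary \ref{corexplicite} is being applied to a cover that $F_3$ is not known (and not claimed) to be.

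Even granting your setup, the proof is not completed on its own terms. For the dual Hesse arrangement the full $(\ZZ/3)^8$-cover gives $E_0^{16}/E_0^{2}=E_0^{14}$ (Example \ref{dualflexes}), so to land on $E_0^5$ you would have to identify precisely which characters of $(\ZZ/3)^8$ pull back under the hypothetical quotient presentation of $F_3$ and redo the jumping-character bookkeeping for that subgroup -- exactly the step you defer as ``the principal obstacle.'' Instead, the exponent $5$ is fixed by appealing to $\dim H^{0,1}(F_3)=5$ and to Clemens--Griffiths, i.e.\ by importing the answer (or at least its dimension) from outside rather than deriving it from the covering data, which is what the theorem is supposed to produce. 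The part of your argument that does work -- that once $F_3$ is an elementary abelian $3$-cover branched over lines with double and triple points, all isotypical components are powers of the simple elliptic curve $E_0$ -- matches the paper's mechanism; what is missing is the correct input (Terasoma's Ceva-arrangement description) and an actual computation of the exponent from the five components of its characteristic variety.
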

This isogeny was observed recently \cite{toledo}. 
Also, Roulleau cf. \cite{roulleau} obtained the isomorphism class 
of the Albanese variety of Fermat cubic threefold.

\begin{proof} It follows from discussion in \cite{terasoma}
that Fano surface $F_3$ is abelian cover of degree $3^4$ of 
$\PP^2$ ramified over Ceva arrangement. Hence the isogeny 
(\ref{fanoisogeny}) follows as in example \ref{ceva}.
\end{proof}
\end{example}

\section{Applications}

\subsection{Mordell-Weil ranks of isotrivial families of
abelian varieties}

Recall the following (cf. \cite{annalen})
\begin{prop}\label{isotrivial}
 Let $\A \rightarrow \PP^2$ be a regular model 
of an isotrivial abelian 
variety over $\CC(x,y)$ with a smooth fiber $A$. 
Assume that there is a ramified {\it abelian} 
cover $X \rightarrow \PP^2$ such that the pullback of $\A$ to $X$ 
is trivial abelian variety over $X$. 
Let $\Gamma$ be the Galois group of $\CC(X)/\CC(x,y)$. 
Then the trivialization of $\A$ over $X$ yields the action of $\Gamma$ on 
$A$ and the Mordell-Weil rank of $\A$ is equal 
to $\dim_{\QQ} Hom_{\Gamma} (Alb(X),\A) \otimes \QQ$.
\end{prop}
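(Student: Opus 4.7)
The plan is to identify the Mordell-Weil group $\A(\CC(x,y))$ with a group of $\Gamma$-equivariant morphisms from $X$ to $A$, and then apply the universal property of the Albanese variety.

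By hypothesis the pullback of $\A$ along the cover $\varphi: X \to \PP^2$ is trivial, so there is an isomorphism $\varphi^*\A \cong A \times X$ of abelian schemes over $X$. The Galois action of $\Gamma$ on $\varphi^*\A$ covering its action on $X$ translates under this isomorphism into a diagonal action $\gamma \cdot (a,x) = (\rho(\gamma)a,\gamma x)$, where $\rho: \Gamma \to \mathrm{Aut}(A)$ is the action of $\Gamma$ on the fiber coming from the trivialization, as in the statement. A rational section $s$ of $\A \to \PP^2$ pulls back to a $\Gamma$-invariant rational section of $\varphi^*\A \to X$, which under the trivialization is the same datum as a $\Gamma$-equivariant rational map $f_s: X \to A$; conversely, every such equivariant rational map descends to a rational section of $\A$. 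Since rational maps from a smooth projective variety to an abelian variety extend uniquely to morphisms, this identifies $\A(\CC(x,y))$ with the group $\mathrm{Mor}_\Gamma(X,A)$ of $\Gamma$-equivariant morphisms from a smooth projective model of $X$ to $A$.

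Next I would invoke the universal property of the Albanese. Every morphism $f: X \to A$ factors uniquely as $f = g \circ \alpha_X + c$, where $\alpha_X: X \to Alb(X)$ is the Albanese map, $g \in Hom(Alb(X),A)$ is a homomorphism of abelian varieties, and $c \in A$ is a translation constant. Viewing morphisms as a group scheme, this yields an identification
\[
\mathrm{Mor}(X,A) \;\cong\; Hom(Alb(X),A) \oplus A,
\]
the second summand being the constant morphisms. Since $\Gamma$ acts compatibly on both sides, passage to $\Gamma$-invariants gives
\[
\mathrm{Mor}_\Gamma(X,A) \;\cong\; Hom_\Gamma(Alb(X),A) \oplus A^\Gamma.
\]
The summand $A^\Gamma$ corresponds to the constant sections of $\A$, which constitute the $\CC(x,y)/\CC$-trace part of the Mordell-Weil group. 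Modulo this trace the Mordell-Weil group is the finitely generated free abelian group $Hom_\Gamma(Alb(X),A)$, and the rank formula follows by tensoring with $\QQ$.

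The main technical obstacle I anticipate is verifying that the decomposition $\mathrm{Mor}(X,A) \cong Hom(Alb(X),A) \oplus A$ is in fact $\Gamma$-equivariant. If a $\Gamma$-fixed base point on $X$ is available this is immediate; in general one must argue at the level of the representable functor $\underline{\mathrm{Mor}}(X,A)$, noting that for $X$ smooth projective and $A$ abelian it splits as the product of the discrete group $Hom(Alb(X),A)$ with the abelian variety $A$ of constant morphisms, and that the $\Gamma$-action respects this product decomposition.
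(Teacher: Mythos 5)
The paper itself gives no proof of this proposition --- it is recalled from \cite{annalen} --- and your argument is essentially the standard one used there: Galois descent identifies the Mordell--Weil group with the group of $\Gamma$-equivariant morphisms $X \to A$ (rational maps from a smooth model to an abelian variety being everywhere defined), and the universal property of the Albanese map converts these into equivariant homomorphisms $Alb(X) \to A$ up to constants, the constants giving the trace part $A^{\Gamma}$. The one imprecision is the claimed splitting $\mathrm{Mor}(X,A) \cong Hom(Alb(X),A) \oplus A$ as $\Gamma$-modules: the decomposition depends on a base point, and the $\Gamma$-action respects the subgroup of constant maps and the quotient $Hom(Alb(X),A)$, but in general not the product decomposition (conjugating by $\gamma$ shifts the constant by the image of $\alpha_X(\gamma^{-1}x_0)$ under the linear part), so what one really has is a $\Gamma$-equivariant exact sequence $0 \to A \to \mathrm{Mor}(X,A) \to Hom(Alb(X),A) \to 0$. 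This does not affect your conclusion: taking $\Gamma$-invariants, the cokernel of $\mathrm{Mor}_{\Gamma}(X,A) \to Hom_{\Gamma}(Alb(X),A)$ injects into $H^1(\Gamma,A)$, which is finite (a quotient of $H^1(\Gamma,A[\,|\Gamma|\,])$ by divisibility of $A$), so modulo the trace $A^{\Gamma}$ the rank equals $\dim_{\QQ} Hom_{\Gamma}(Alb(X),A)\otimes \QQ$ as required.
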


Let $A$ be an abelian variety over $\CC$.
Given an abelian cover 
$X \rightarrow \PP^2$ with covering group $\Gamma$
and a homomorphism $\Gamma \rightarrow Aut A$, 
an example of isotrivial abelian variety 
over $\CC(x,y)$ as in Prop.\ref{isotrivial}
can be obtained as a resolution of singularities of 
\begin{equation}\label{isotrivialformula}
  \A_X=X \times A/\Gamma
\end{equation} 
where $\Gamma$ acts on $X \times A$ diagonally: 
$(x,a) \rightarrow (\gamma \cdot x,\gamma \cdot a), 
\gamma \in \Gamma, x \in X, a \in A)$. The map 
$\A_X \rightarrow X/\Gamma=\PP^2$ gives to $\A_X$ 
a structure of isotrivial abelian variety over $\CC(x,y)$.

Calculations of Albanese varieties in examples of previous sections 
yield values of Mordell-Weil ranks of isotrivial  
abelian varieties in many examples as in 
Prop. \ref{isotrivial}.  
\begin{example}Let $J_{2,5}$ denote the Jacobian of a smooth projective model 
of genus $2$ curve $C$ given by equation: $y^5=x^2(x-1)^2$ 
(i.e. one of the curves $C_i$ in Example \ref{ceva}).
Assume that the direct sum $\Gamma=\ZZ_5^5$ acts 
on $C$ so that the generator of 
each summand acts as the multiplication by $\zeta, \zeta=exp({{2 \pi i}
\over 5}):
(x,y) \rightarrow (x,\zeta y)$ 
(cf. \ref{ceva}). This induces the action of $\ZZ_5^5$ on $J_{2,5}=Jac(C)$.
In example \ref{ceva}, we viewed $\Gamma$ as the quotient of $\ZZ_5^6$ 
by $(1,1,1,1,1,1)$, so that each summand
corresponds to monodromy about one of 6 lines in Ceva arrangement. 
Then an identification of $\ZZ_5^5$ and $\ZZ_5^6/\ZZ_5$ can be obtained 
by identifying 
the former group with the image in the latter of the  
subgroup of $\ZZ_5^6$ of elements $(a_1,a_2,a_3,a_4,a_5,
-\sum_{i=1}^{i=5}) a_i, a_i \in \ZZ_5$.
In such presentation of $\Gamma$, the action of first 5 components 
of elements in $\ZZ_5^6$ on $C$ is given by multiplication 
by $\zeta$ while action of the  last component on $C$ is trivial. 

Consider isotrivial family $\A_X$ of 
abelian varieties over $\PP^2$ given by (\ref{isotrivialformula}) with the zero set of discriminant 
being the Ceva arrangement of lines which is the quotient of 
$X \times J_{2,5}$, where $X$ is the abelian cover with the covering 
group $\ZZ_5^5$ considered in example \ref{ceva}. 
The action of $\Gamma$ is the 
diagonal action of $\Gamma=\ZZ_5^5$ as in (\ref{isotrivialformula}). 
The Albanese variety of the abelian cover $X$ in example \ref{ceva}
is isogenous to $(J_{2,5})^{15}$ (cf. (\ref{ceva}))
and hence the 
rank of the Mordell-Weil group of the quotient is equal
to 
\begin{equation}\label{hom}
\rk Hom_{\ZZ_5^5} (J_{2,5}^{15},J_{2,5}) \otimes \QQ
\end{equation}

The characters of representation of $\Gamma=\ZZ_5^6/\ZZ_5$ 
on $H_1(J_{2,3}^{15})$ are the characters of representation 
of $\Gamma$ on $H^{1,0}(X,\CC)$ i.e. the characters from 
the characteristic variety of Ceva arrangement.
Clearly neither of two characters for described
above action of $\Gamma$ on $H^1(C,\CC)$, having the 
form $(a,a,a,a,a,1), a \in \ZZ_5$ in the basis of $\Char \Gamma$
dual to the one coming from direct sum presentation of $\ZZ_5^6$, 
belongs to the characteristic variety of Ceva arrangement.
Hence the rank (\ref{hom}) is zero.

\end{example}

\subsection{Periodicity of Albanese varieties}

\begin{theorem}\label{periodicity}
Let $\C$ be a curve in $\PP^2$ such that there exist a 
surjection $\pi: \pi_1(\PP^2\setminus \C)
\rightarrow \ZZ$ \footnote{For any curve in $\C$ (including 
irreducible in which case $H_1(\PP^1\setminus \CC,\ZZ)=\ZZ/({\rm deg}\C)\ZZ$)
adding to $\C$ a generic line in $\PP^2$ yields a curve admitting 
such surjection cf. \cite{duke}.}. Consider two sequences
of cyclic covers composed of ramified and unramified covers  
corresponding to surjections $\pi_n: \pi_1(\PP^2\setminus \C)
\rightarrow \ZZ \rightarrow \ZZ/n\ZZ$  

(1) The sequence of isogeny classes of Albanese varieties of 
a tower of cyclic branched covers with given ramification 
locus $\C$ corresponding to surjections $\pi_n$ is periodic.\footnote{i.e. exist $N \in \NN$ such that Albanese varieties 
of cyclic covers corresponding to $\pi_n,\pi_{n'}$ with 
$n\equiv n'\ \mod N$ are isogeneous.}

(2) The sequence of isogeny classes of semi-abelian varieties
which are Albanese varieties of unbranched covers 
a complement to a curve $\C$ corresponding to surjections
$\pi_n$ is periodic.

\end{theorem}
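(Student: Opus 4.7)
The plan is to combine Sakuma's formula (Theorem \ref{homologycovers}), the Arapura structural result on characteristic varieties recalled in Section \ref{prelim}, and the cyclic case of the decomposition theorem (Theorem \ref{mainresult} and its predecessor in \cite{annalen}). By Arapura, every positive-dimensional irreducible component of $V_i(\PP^2\setminus \tilde\C)$, for a subdivisor $\tilde\C\subset \C$, has the form $\psi\cdot T$ for a finite-order translating character $\psi$ and a subtorus $T$; let $N_1$ denote the least common multiple of the orders of all such $\psi$ over the finite collection of pairs (subdivisor, component). Let $N_2$ be the least common multiple of the integers $\pi(\gamma_i)$, where $\gamma_i$ runs over the meridians of the irreducible components of $\C$. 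Set $N=\mathrm{lcm}(N_1,N_2)$; this will serve as the period.

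For part (1), fix $n\equiv n'\pmod N$. The pullback $\pi_n^*$ embeds $\Char(\ZZ/n\ZZ)$ as an order-$n$ subgroup of the one-parameter subgroup $\pi^*(\CC^*)\subset \Char\pi_1(\PP^2\setminus\C)$. By Sakuma's formula, a character $\chi \in \Char(\ZZ/n\ZZ)$ contributes to $Alb(\bar X_n)$ precisely when $\pi_n^*(\chi)$ lies in a positive-dimensional component of $V_i(\PP^2\setminus\C_{\pi_n^*(\chi)})$; the subdivisor $\C_{\pi_n^*(\chi)}$ is determined by which meridians $\gamma_i$ satisfy $\chi(\pi(\gamma_i))\ne 1$ and so depends on $n\bmod N_2$, while membership in a coset $\psi T$ depends on $n$ modulo the order of $\psi$, a divisor of $N_1$. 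Consequently the set of contributing characters, together with their depths and the components they inhabit, is in natural bijection for $n$ and $n'$. Applying Theorem \ref{mainresult} in the cyclic case identifies $Alb(\bar X_n)$ up to isogeny with a product of local Albanese varieties of (possibly non-reduced) germs at the singular points $P\in \C$, with multiplicities recording the orders of the monodromies of $X_n$ around local branches at $P$; these are functions of $\pi(\gamma_i)\bmod n$ and stabilize for $n\equiv n'\pmod N$. Hence $Alb(\bar X_n)$ and $Alb(\bar X_{n'})$ are isogenous.

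For part (2), the semi-abelian Albanese $Alb(\tilde X_n)$ fits in an extension $0\to T_n\to Alb(\tilde X_n)\to A_n\to 0$ (cf.\ Section \ref{semiabelian}). The abelian part $A_n$ coincides with $Alb(\bar X_n)$ and is periodic by part (1). The character lattice of $T_n$ has rank equal to the number of irreducible components of $\bar\pi_n^{-1}(\C)$ minus the rank of the algebraic relations in $H^2(\bar X_n)$ they impose; this is computed by orbit counting for the $\ZZ/n\ZZ$-action on the preimages of the components of $\C$, with orbit lengths governed by $\pi(\gamma_i)\bmod n$, hence periodic modulo $N$. The extension class, encoded in the period pairings between boundary cycles and logarithmic $1$-forms on $\bar X_n$, is an invariant of the mixed Hodge structure on $H^1(\tilde X_n)$ and is similarly controlled modulo $N$, yielding the desired isogeny of semi-abelian varieties.

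The main obstacle is the possibility that the one-parameter subgroup $\pi^*(\CC^*)$ is entirely contained in a translated subtorus $\psi_j T_j$ of some positive-dimensional component. In that case $\pi^*(\mu_n)\cap \psi_jT_j$ is infinite rather than finite and periodic, and geometrically $\pi$ factors through $(\nu_j)_*\colon H_1(\PP^2\setminus\C)\to H_1(\PP^1\setminus D_j)$; then $X_n$ covers a cyclic branched cover of $\PP^1\setminus D_j$ whose Jacobian grows linearly in $n$, seemingly contradicting periodicity. Resolving this requires either verifying that such a factorization cannot happen for the chosen $\pi$ (e.g., via a careful use of the generic-line device in the footnote, ensuring that $\pi^*(\CC^*)$ is transverse to the characteristic variety), or refining the argument so that the unbounded contributions from several components sharing $\pi^*(\CC^*)$ cancel in the isogeny decomposition; both avenues ultimately rest on a careful analysis of the Alexander polynomial of $\pi$ and the jumping locus.
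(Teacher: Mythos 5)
There are genuine gaps. First, by your own admission the argument is incomplete: the ``main obstacle'' you flag at the end is precisely the point that must be settled, and the paper settles it by a fact you never invoke, namely that the Alexander polynomial $\Delta_\pi(t)$ of $\C$ relative to $\pi$ is a nonzero polynomial all of whose roots are roots of unity (cf.\ \cite{duke}). Equivalently, the one--parameter subgroup $\pi^*(\CC^*)$ meets the characteristic variety in the finite set of roots of $\Delta_\pi$, so it can never be contained in a positive--dimensional component and the depths occurring along $\pi^*(\mu_n)$ are determined by which roots $\xi$ satisfy $\xi^n=1$, i.e.\ by $n$ modulo the $N$ built from the orders of the roots. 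This is exactly how the paper defines the period; your $N$ built from Arapura translation characters and meridian images is not wrong in spirit, but without the torsion/boundedness input the whole construction is conditional, and you leave it so. A second, structural gap: you invoke Theorem \ref{mainresult}, whose hypotheses (Condition \ref{condition} and all components of the characteristic variety passing through the identity) are not assumed in Theorem \ref{periodicity}; the paper's proof of periodicity deliberately avoids the decomposition theorem and uses only Theorem \ref{homologycovers} and the Alexander polynomial.

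Second, and more seriously for part (2): matching numerical invariants --- abelian part isogenous by (1), torus rank computed by orbit counting, extension class ``controlled modulo $N$'' --- does not produce an isogeny of semi-abelian varieties; an isogeny is an actual homomorphism, and two extensions with isogenous abelian quotients and tori of equal rank need not be isogenous. The paper gets the homomorphism for free from geometry: for $n\equiv n'\ (\mathrm{mod}\ N)$ both $X_n$ and $X_{n'}$ cover a fixed member $X_{n_N}$ of the tower, the induced maps $H_1(X_n,\CC)\rightarrow H_1(X_{n_N},\CC)$ (and the ramified analogues) are isomorphisms by the root-counting above, and integrally they are injective (resp.\ have finite kernel and cokernel); functoriality of the (semi-abelian) Albanese then turns these covering maps into isogenies, handling the torus part, the abelian part and the extension data simultaneously. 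Your proposal constructs no such map, so even granting periodicity of all the invariants you list, the conclusion of (2) (and, strictly speaking, the matching of isogeny components in (1)) does not follow as written.
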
 

\begin{proof} Let 
$\Delta_{\pi}(t)$
be the Alexander polynomial of $\C$ corresponding to the surjection 
$\pi$ (cf. \cite{duke}). For each root $\xi$ of $\Delta_{\pi}(t)$  
let $n_{\xi}$ be its order (recall that any root 
of  Alexander polynomial of an algebraic curve   
is a root of unity). 
For each set $\Xi$ of distinct roots of $\Delta_{\pi}(t)$ let 
$n_{\Xi}=lcm(n_{\xi}), \xi \in \Xi$ and let $N$ be the least common multiple 
of integers $n_{\Xi}$. 
To each congruence class modulo $N$ 
corresponds a subset $\Xi$ (possibly empty) 
such that integers in this class are 
divisible by exactly one (or none) among the integers $n_{\Xi}$. 

The rank of $H_n(X_{n})$ depends only on the number of roots 
$\xi$ such that $\xi^n=1$ (cf. \ref{homologycovers})
i.e. on $n \mod N$.
More precisely, let $X_n$ (resp.
$\bar X_n$) denotes unramified (resp. ramified) cover of $\PP^2\setminus \C$ 
(resp. $\PP^2$). Then  
$H_1(X_n,\CC) \rightarrow H_1(X_{n_N},\CC)$
(resp. $H_1(\bar X_n,\CC) \rightarrow H_1(\bar X_{n_N},\CC)$)
are isomorphisms for all $n$ belonging to one 
of the congruence class modulo 
$N$. For $n$ not belonging to any of these 
congruence classes, one has $H_1(X_n,\CC)=H_1(\bar X_n,\CC)=0$. 
Moreover 
the map $H_1(X_n,\ZZ) \rightarrow H_1(X_{n_N},\ZZ)$
(resp. $H_1(\bar X_n,\ZZ) \rightarrow H_1(\bar X_{n_N},\ZZ)$)
is injective (resp. has finite kernel and co-kernel). Hence
the isogeny class of Albanese variety of $X_n$ with $n$ in one and 
only one  congruence class as above is constant.  Hence the claims (1) and (2) follow.
\end{proof}

\begin{remark} This result can be compared with results on 
periodicity properties of Betti and Hodge numbers.
For a curve $\C$ in $\CC^2$ which a union of $r$ components,
let $h^{1,0}_a(n)$ (resp. $h^{1,0}_c(n)$) denote the sequence of 
the Hodge numbers of a smooth compactification $X_a(n)$ (resp. $X_c(n)$) of
abelian (resp. cyclic) covers of the complement
in the tower of abelian (resp. cyclic) cover of $\PP^2$
corresponding to surjection 
$\pi_1(\CC^2 \setminus \C) \rightarrow \ZZ^r \rightarrow (\ZZ/n\ZZ)^r$ 
(resp. surjection onto $\ZZ_n$  which is the composition of the latter 
surjection with summing up of the coordinates in $\ZZ^r/n\ZZ^r$).

It follows from \cite{eko} that the 
sequence $h^{1,0}_a(n)$ is polynomial periodic 
(similarly, \cite{duke} implies that sequence $h^{1,0}_c(n)$ 
is periodic).
Recall that $n \rightarrow a(n) \in \NN$ is polynomial periodic if there 
are periodic functions $a_i(n)$ such that $a(n)=\sum a_i(n)n^i$.

Now let $K_{\A\B}$ be the K-group of motives of abelian varieties over $\CC$
up to isogeny. More precisely this is the $K$-group of the category $\A\B$
defined as follows. 
The objects of $\A\B$ are abelian varieties over $\CC$ and 
$Hom_{\A\B}(A,A')$ is the group of homomorphism between $A,A'$ tensored 
with $\QQ$.
The $K$-group $K_{\A\B}$  
is an (infinitely generated) $\ZZ$-module with canonical 
surjection $dim: K_{\A\B} \rightarrow \ZZ$ given by $A\mapsto dim A$.
The theorem \ref{periodicity} 
implies that the sequence $Alb(X_c(n)) \in K_{\A\B}$ is 
periodic.
However isogeny components of Albanese varieties of 
{\it abelian} possibly non-cyclic covers span an infinitely generated subgroup 
of the K-group of isogeny classes of abelian varieties. 

In particular, there are no periodic functions $a_i(n) \in K_{\A\B}$ such that 
$Alb(X_a(n))=a_i(n)n^i$ (though as was mentioned $dim Alb(X_a(n))$ is 
polynomially periodic). Details of this will be presented elsewhere.

\end{remark}

\end{document}